\documentclass[11pt]{amsart}
\usepackage{geometry, tikz-cd}                % See geometry.pdf to learn the layout options. There are lots.
\geometry{letterpaper}                   % ... or a4paper or a5paper or ... 
\usepackage{amsthm}
\usepackage{graphicx}
\usepackage{amssymb,amsmath}
\usepackage{epstopdf}
\usepackage{xcolor}
\usepackage{ulem}
\usepackage{cancel}
\usepackage[percent]{overpic}
\usepackage{stackengine}

\numberwithin{equation}{section}

\DeclareGraphicsRule{.tif}{png}{.png}{`convert #1 `dirname #1`/`basename #1 .tif`.png}
\theoremstyle{plain}
\newtheorem{thm}{Theorem}[section]

\newtheorem{prop}[thm]{Proposition}
\newtheorem{lem}[thm]{Lemma}

\newtheorem{rem}[thm]{Remark}

\makeatletter
\@namedef{subjclassname@2020}{%
  \textup{2020} Mathematics Subject Classification}
\makeatother

%%%% creates disjoint union symbol
\makeatletter
\def\moverlay{\mathpalette\mov@rlay}
\def\mov@rlay#1#2{\leavevmode\vtop{%
   \baselineskip\z@skip \lineskiplimit-\maxdimen
   \ialign{\hfil$\m@th#1##$\hfil\cr#2\crcr}}}
\newcommand{\charfusion}[3][\mathord]{
    #1{\ifx#1\mathop\vphantom{#2}\fi
        \mathpalette\mov@rlay{#2\cr#3}
      }
    \ifx#1\mathop\expandafter\displaylimits\fi}
\makeatother

%%%%%%%

\title{Counting reciprocal hyperbolic elements   in Hecke groups}

\author{Ara Basmajian}
\address[Ara Basmajian]{The Graduate Center, CUNY \\ 365 Fifth Ave., N.Y., N.Y., 10016 and Hunter College, CUNY \\ 695 Park Ave., N.Y., N.Y., 10065, USA}
\email{abasmajian@gc.cuny.edu}
 \thanks{The first two authors dedicate this work  to the third author, Robert Suzzi Valli,  who passed away before the completion of this manuscript.}
 \thanks{A.B.  partially support by a PSC-CUNY Grant and a Simons Collaboration Grant (359956, A.B.).}

\author{Blanca Marmolejo}
\address[Blanca Marmolejo]{Bluetab, an IBM Company, 28020 Madrid, Spain}
\email{bmarmolejo1@gmail.com}
\thanks{}

 \author{Robert Suzzi Valli}

\email{}
\thanks{}

\keywords{asymptotic growth, Hecke group, Hecke surface, polynomial roots, reciprocal geodesic, linear recurrence relation}
\subjclass[2020]{Primary 20F69, 32G15, 57K20; Secondary 20H10, 53C22, 05E16}
\begin{document}
\begin{abstract}  A {\it reciprocal geodesic}  on a $(2,k, \infty)$ Hecke surface is  a geodesic loop based  at an even order cone point $p$  traversing its path an even number of times. Associated to each reciprocal geodesic is the conjugacy class of a  hyperbolic element in the 
$(2,k,\infty)$ Hecke group whose axis  passes through a cone point that projects to $p$. Such an element is called a 
{\it reciprocal hyperbolic element}  based at $p$.

 In this paper,  we  determine the asymptotic growth rate and limiting constant 
(in terms of word length) of the number of  primitive conjugacy classes of  reciprocal  hyperbolic elements in  a Hecke group.   
\end{abstract}
\maketitle

%%%%%%%Introduction%%%%%%%%%%%%%%%%

\section{Introduction}

Our goal in this paper  is to find the  growth rate and the asymptotic constants (in terms of word length) of  reciprocal geodesics as well as the primitive reciprocal geodesics on  Hecke surfaces.  For $k \geq 3$ an integer, the 
{\it $(2,k, \infty)$  Hecke group} 
is  the image of the  discrete faithful representation  
$\mathbb{Z}_2 \ast \mathbb{Z}_k \rightarrow \textrm{PSL}(2,\mathbb{R})$, given by 

$$
a \mapsto A:=\begin{pmatrix} 
     0 & -1   \\
     1 & 0 
\end{pmatrix} \text{ and  } b \mapsto B :=\begin{pmatrix} 
     0 & -1    \\
     1 & 2\cos \frac{\pi}{k}
\end{pmatrix}
$$
where $a$ 
is  the generator of $\mathbb{Z}_{2}$  and $b$ is the generator of  
$\mathbb{Z}_k$.  We use the convention that $k=\infty$ corresponds to the group  $\mathbb{Z}_2 \ast \mathbb{Z}$, and hence  the   Hecke group representation  for 
$\mathbb{Z}_2 \ast \mathbb{Z}  \rightarrow \textrm{PSL}(2,\mathbb{R})$, is given by 

$$
a \mapsto A=\begin{pmatrix} 
     0 & -1   \\
     1 & 0 
\end{pmatrix} \text{ and  } b \mapsto B:=\begin{pmatrix} 
     0 & -1    \\
     1 & 2
\end{pmatrix}
$$
We note that $BA$ is parabolic for all $k=3,4,...,\infty$. Moreover for $k=\infty$, $B$ is parabolic as well. 

Now consider the {\it Hecke surface};  that is, the $(2,k,\infty)$ triangle orbifold,  $X_{k}=\Bbb{H}/ \Gamma_{k}$, where $\Gamma_{k}=<A,B>$. If  $k=\infty$, we use $X_{\infty}$ to denote  the $(2,\infty,\infty)$ orbifold.  Geometrically the  orbifold  $X_{k}$ can be realized by gluing two isometric copies of the 
$(\frac{\pi}{2}, \frac{\pi}{k}, 0)$-hyperbolic  triangle along  common sides as in Figure \ref{fig:recgeod}. Throughout this work, by a closed geodesic on $X_{k}$ we mean the projection of the axis of a hyperbolic element in $\Gamma_{k}$.

%%%%%%%%%%%%%%%%%%%%%%%%%%
\begin{figure}[t]
\begin{center}
\begin{overpic}[scale=.5]{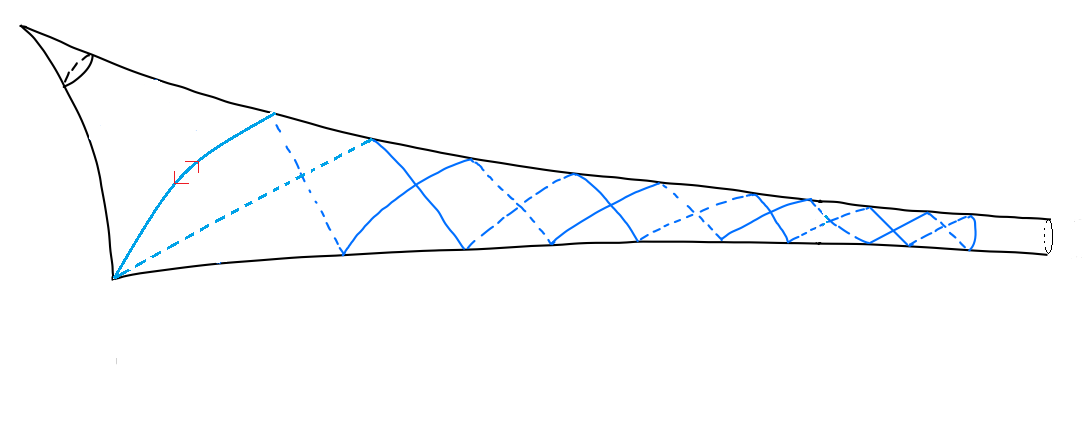}
\put(0,36.5){\footnotesize{$k$}}
\put(10,11){\footnotesize{$2$}}
\put(14,24){$\gamma$}
\end{overpic}
\vspace*{-.75in}
\end{center}
\caption{A reciprocal geodesic $\gamma$ based at order 2 on $X_k$}
\label{fig:recgeod}
\end{figure}
%%%%%%%%%%%%%%%%%%%%%%%%

Let $p$ be an even order cone point of the Hecke surface $X_{k}$. A {\it reciprocal geodesic based at $p \in X_{k}$}   is a  geodesic that starts and ends at $p$, traversing its image an even number of times. Equivalently,  such a geodesic    lifts to a  hyperbolic element  that is the product of two  distinct conjugate involutions whose fixed  points project to $p$.
Such a hyperbolic element we call a {\it reciprocal hyperbolic element at $p$}.  
Since we are exclusively working with $(2,k,\infty)$ surfaces, our reciprocal geodesics are either based at the order  $k$ (if $k$ is even) or  the order 2 cone points.  The  reciprocal geodesics based at the order $k=2m$  cone point corresponds to the involution conjugacy class   $b^{m}$,  and the reciprocal geodesics based at the order 2 cone point correspond to the   involution conjugacy class of the  element $a$.  Of course, in the case that $k$ is odd there is only one conjugacy class of order two involution,   and so reciprocal geodesics arise as hyperbolic elements whose axes pass through the order two cone point corresponding to the element $A \in \Gamma_{k}$. However, in the  $k=2m$ even case there are two involution conjugacy classes: one  corresponding to the involution $A \in  \Gamma_{k}$,  and the other conjugacy class corresponding to the involution $B^{m} \in \Gamma_{k}$.

Our interest is in counting conjugacy classes of reciprocal  hyperbolic elements. One complicating feature that arises   is that there are reciprocal geodesics that are simultaneously based at the order 2 cone point and the order 
$k=2m$ cone point (see Remark \ref{rem: simultaneously in both}).  Nevertheless, we have

\vskip5pt

%\begin{thm} [Based at order 2] \label{thm: asymptotics I}
\noindent{\bf Theorem A.} (Based at order 2)
{\it Let $X_{k}$ be the $(2,k,\infty)$ Hecke surface. Then
\begin{enumerate}

\item  For $k=2m+1$ and $m \geq 2$
\begin{displaymath}
\frac{\big|\{\gamma \subset X_{k} \text{ a reciprocal geodesic  based at order 2}: |\gamma|=2t\}\big|}
{\alpha_{2m+1}^{t}} \xrightarrow{t\to\infty} \frac{c_{2m+1}}{2}
\end{displaymath}
where $c_{2m+1}$ is a positive constant and $\alpha_{2m+1}$ is the unique positive root of the polynomial,

\begin{equation*}
x^{m+1} -2x^{m-1}-2x^{m-2} - \dots -2x-2
\end{equation*}

\item For $k=2m$ and $m \geq 2$
\begin{displaymath}
\frac{\big|\{\gamma \subset X_{k} \text{ a reciprocal geodesic based at order 2}: |\gamma|=2t\}\big|}
{\beta_{2m}^{t}} \xrightarrow{t\to\infty} \frac{d_{2m}}{2}
\end{displaymath}
where $d_{2m}$ is a positive constant and $\beta_{2m}$ is the unique positive root of the polynomial,

\begin{equation*}
 x^{m+1} -2x^{m-1} -2x^{m-2}- \dots -2x-1 
\end{equation*}

\item For $k=\infty$, 
\begin{displaymath}
\frac{\big|\{\gamma \subset X_{\infty} \text{ a reciprocal geodesic  based at order 2} : |\gamma|=2t\}\big|}
{2^{t}} \xrightarrow{t\to\infty} \frac{1}{6}
\end{displaymath}
\end{enumerate}
The same asymptotics hold for the primitive reciprocal geodesics in items (1)-(3).}
%\end{thm}

\vskip5pt

We next consider reciprocal geodesics based at the order $k$ cone point for $k$ even. 

\vskip5pt

%\begin{thm}[Based at  order $k$]  \label{thm: asymptotics II}
\noindent{\bf Theorem B.} (Based at order $k$)
{\it Fix an integer $m \geq2$ and suppose   
$X_{k}$ is the $(2,k,\infty)$ Hecke surface with $k=2m$  even. Then
\begin{displaymath}
\frac{\big|\{\gamma \subset X_{k} \text{ a reciprocal geodesic based at order 
$k$}: |\gamma|=2t\}\big|}
{\beta_{2m}^{t}} \xrightarrow{t\to\infty} \frac{e_{2m}}{2}
\end{displaymath}
where $e_{2m}$ is a positive constant and $\beta_{2m}$ is the unique positive root of the polynomial,

\begin{equation*}
  x^{m+1} -2x^{m-1} -2x^{m-2}- \dots -2x-1   
\end{equation*}
The same asymptotic statements hold for the primitive reciprocal geodesics on $X_{k}$ for $k=3,4,...,\infty$.}
%\end{thm}

Table \ref{ta: poly} contains some sample    associated polynomials, dominant roots, and asymptotic constants that appear in   Theorems  $\text{A}$  and $\text{B}$.
\vskip5pt

\noindent{\bf Remark.}
{\it Note that the orbifold surface $X_{\infty}$ is the $3$-fold cover of the modular surface $X_3$, and hence not surprisingly the reciprocal geodesics on $X_{\infty}$ grow at $1/3$ the rate that the reciprocal geodesics on $X_3$ grow.}

\vskip5pt

\noindent{\bf Remark.}
{\it It is elementary to show that   the dominant roots   $\{\alpha_{2m+1}\}_{m=2}^{\infty}$  of the polynomial in (\ref{eq: polynomial and roots})  form an 
 increasing sequence, satisfy $\sqrt{2}< \alpha_{k}< 2$, and converge to $2$. The same holds  for  the dominant roots $\{\beta_{2m}\}_{m=2}^{\infty}$. The positive constants 
 $\{c_{2m+1}\}_{m=1}^{\infty}$,
$\{d_{2m}\}_{m=2}^{\infty}$, and $\{e_{2m}\}_{m=2}^{\infty}$ are each determined by solving a linear system of equations. 
 For more details  on these  properties, and for solving the linear system of equations  to obtain the positive constants  see section \ref{sec: linear system}.}

 When $k$ is even, as a consequence of Theorems 
 $\text{A}$ and $\text{B}$,
 the  reciprocal geodesics based at the cone point of order 2 grow at the same rate as the ones based at the  order $k$.
 More precisely we have
 
%\begin{cor}
\vskip5pt
\noindent{\bf Corollary C.}
{\it Suppose $X_{k}$ is the $(2,k,\infty)$ Hecke surface with 
$k=2m$  even.Then
\begin{equation}
\frac{\big|\{\gamma \subset X_{k} \text{ a reciprocal geodesic based at order 
$k$}: |\gamma|=2t\}\big|}{\big|\{\gamma \subset X_{k}\text{ a reciprocal geodesic based at order 
2}: |\gamma|=2t\}\big|} \xrightarrow{t\to\infty}  \frac{e_{2m}}{d_{2m}}.
\end{equation}
%
%   %\end{equation}
In particular, 
\begin{equation}
\frac{\big|\{\gamma \text{ any reciprocal geodesic in $X_{k}$}: |\gamma|=2t\}\big|}
{\beta_{2m}^{t}} \xrightarrow{t\to\infty} \frac{d_{2m}+e_{2m}}{2}
\end{equation}}

%\end{cor}

%%%%%%%%%%%%%%%%%%%%%%%%
\begin{table}\label{ta: poly}
\begin{center}
\begin{tabular}{ |c|c|c|c| } 
 \hline
 Hecke group & Polynomial & Dominant root & Coefficient   \\ 
\hline
\hline
 $\mathbb{Z}_2\ast \mathbb{Z}_3$&$x^{2}-2$ & No dominant root& \\ 
\hline
 $\mathbb{Z}_2\ast \mathbb{Z}_4$&  $x^{3}-2x-1$& $\beta_{4}=1.61803$  & $d_{4}=0.44721$  \\ 
\hline
 $\mathbb{Z}_2\ast \mathbb{Z}_5$&$x^{3}-2x-2$&$\alpha_{5}=1.76929$ & $c_{5}=0.42353$  \\
\hline
 $\mathbb{Z}_2\ast \mathbb{Z}_6$&$x^{4}-2x^{2}-2x-1$ &$\beta_{6}=1.83929$  & $d_{6}=0.40061$  \\
\hline
 $\mathbb{Z}_2\ast \mathbb{Z}_7$&$x^{4}-2x^{2}-2x-2$& $\alpha_{7}=1.89932$ & $c_{7}=0.38472$  \\
\hline
 $\mathbb{Z}_2\ast \mathbb{Z}_8$&$x^{5}-2x^{3}-2x^{2}-2x-1$ &$\beta_{8}=1.92756$ & $d_{8}=0.37289$  \\
\hline
 $\mathbb{Z}_2\ast \mathbb{Z}_9$&$x^{5}-2x^{3}-2x^{2}-2x-2$ & $\alpha_9=1.95350$  & $c_{9}=0.36313$  \\
\hline 
 $\mathbb{Z}_2\ast \mathbb{Z}_{10}$&$x^{6}-2x^{4}-2x^{3}-2x^{2}-2x-1$&$\beta_{10}=1.96595$  & $d_{10}=0.35656$  \\
\hline
 $\mathbb{Z}_2\ast \mathbb{Z}_{11}$&$x^{6}-2x^{4}-2x^{3}-2x^{2}-2x-2$&$\alpha_{11}=1.97781$ & $c_{11}=0.35085$  \\
\hline 
 $\mathbb{Z}_2\ast \mathbb{Z}_{12}$&$x^{7}-2x^{5}-2x^{4}-2x^{3}-2x^{2}-2x-1$ &$\beta_{12}=1.98358$ & $d_{12}=0.34689$  \\
 \hline
  $\mathbb{Z}_2\ast \mathbb{Z}_{13}$&$x^{7}-2x^{5}-2x^{4}-2x^{3}-2x^{2}-2x-2$ &$\alpha_{13}=1.9892$ & $c_{13}=0.34335$  \\
 \hline
  $\vdots$ &$\vdots$ &$\vdots$ & $\vdots$  \\
 \hline
  $\mathbb{Z}_2\ast \mathbb{Z}$&$x^{2}-x-2$& $\alpha_{\infty}=2$ & 0.3333...\\
 \hline
\end{tabular}
\end{center}
\caption{Hecke group, the associated polynomial for the reciprocal geodesics based at the order 2, and the  dominant root and coefficient of the polynomial.} 
\end{table}
%%%%%%%%%%%%%%%

 When $k$ is even there are special  reciprocal geodesics based at the order two cone point of 
 $X_{k}=\mathbb{H}/\Gamma_{k}$.  These are non-primitive reciprocal geodesics that start in the order $2$ cone point,  travel to the 
order $k$ cone point, bounce back to the order two cone point and then retrace their steps an even number of times.  These reciprocal geodesics are necessarily non-primitive. We  call their representative words in 
$\Gamma_{k}$, $\mathcal{B}$-type words. They  are simultaneously based at the order 2 cone point and at the order $k$ cone point. 
 For example, the word $ab^{2}ab^{2}$ is a 
$b$-type word in $\mathbb{Z}_{2}*\mathbb{Z}_{4}$. In Lemma  \ref{lem:btypesslow}, we prove that such special reciprocal words  are negligible, allowing us to  conclude the asymptotics in Theorems $\text{A}$   and $\text{B}$ and to precisely determine the asymptotic constants. 
 
\vskip5pt
\noindent {\bf Some background:}  The counting problem for reciprocal geodesics along with  other combinatorial counting problems in the case  of the modular surface, $k=3$,  have been investigated in     \cite{BasSuz}, \cite{BasSuz2}, and 
\cite{BasLiu}.  For the modular group the associated polynomial is $x^{2}-2$ and hence there is no dominant root. The number of reciprocal geodesics of length $2t$ in the modular surface is 
$\frac{1}{4}   \left(\sqrt{2} \right)^{t}\left(1+(-1)^{t}\right)$. 
See  \cite{BasSuz} for the details.                                         
The $k$ odd   and $k=\infty$ cases  for Hecke surfaces  in this paper are  worked out by the second named author in the Doctoral  thesis  \cite{Mar}. For $k$ even, in the paper \cite{DasGon2} the authors determine an asymptotic upper bound for the number of reciprocal geodesics of word length less than $2t$.   Our work was done  independent of theirs. 
The growth rate of the reciprocal geodesics 
  in terms of geometric length  was proven  by   Sarnak \cite{Sar} for the modular group,  and by Erlandsson-Souto 
  \cite{ErSo2}   for  general hyperbolic orbifolds with an even order  cone point. See also \cite{BasSuz, B-K3, DasGon, Er, ErPaSo, ErSo, Mirz, Ri, Tra} for related results and background. For the fundamentals of hyperbolic geometry see \cite{Bus},  and for the basics on combinatorial group theory see \cite{LyndSch, Mag}.  Essential results from analytic combinatorics can be found in  \cite{Fla-Ph}.
  
  \vskip5pt
  \noindent{\bf Plan of the paper:} The proof of Theorem 
  $\text{A}$   is a combination of Theorems
  \ref{thm:binetz2zkoddforrecursion}, \ref{thm:binetz2zkevenforrecursion}, and \ref{thm:growth prim conjugacy special commutators}.   The proof of     
 Theorem  $\text{B}$  is  Theorem  \ref{thm: k even based at order k} in Section \ref{sec: rec. growth based at order k}.
  In section \ref{sec:basics and notation} we set-up notation and state  some of the basics. In section \ref{countingbinarywords}
we outline the general approach to proving Theorems A and B.
In section \ref{sec: normal forms}, we discuss normal forms of the reciprocal words and prove some elementary properties 
they exhibit. Section  \ref{sec: rec. growth based at order 2}
contains  the proofs of  items (1) and (2) of Theorem $\text{A}$, and section  \ref{sec: rec. growth based at order k} has the proof of Theorem $\text{B}$. Section \ref{sec: rec. growth based at order infinity} contains the proof of  item (3) in Theorem A. In section \ref{sec: poly properties} we discuss the properties of the relevant polynomials in Theorems  $\text{A}$ and $\text{B}$, and finally in section \ref{sec: linear system} we   outline  how to find the coefficients in Theorems $\text{A}$ and $\text{B}$,
and finish with an illustrative example of the procedure.

%%%%%%%%%Basics,  notation, and normal forms %%%%%%%%%%%%%%%%%%%%

\section{Basics and  notation} \label{sec:basics and notation}

%%%%%%%%%%%Notation%%%%
\begin{table} \label{table: notation}
\begin{center}
\begin{tabular}{|c|c| } 
 \hline
 {\bf Definition}& {\bf Notation}\\ 
\hline
Commutator of $x$ and $y$ &$[x,y]$  \\ 
\hline
Reciprocal words  & $\mathcal{R}$   \\ 
\hline
Primitive reciprocal words& $\mathcal{R}^{p}$\\
\hline 
Word length of $g$ & $|g|$\\
\hline
Conjugacy class length of $[g]$ & $|[g]|$ \\
\hline 
Reciprocal words in normal form & $\mathcal{N}$\\
\hline
Set of reciprocal words based at order 2 and order $k$ & $\mathcal{B}$-types \\
\hline
\hline 
\end{tabular}
\end{center}
\caption{Notation Table}
\label{tab:growthrates}
\end{table}
%%%%%%%%%%%%%%%%%%%%%%%%%
%We use the notation $f \sim g$ to mean asymptotic to and the symbol $f \gtrsim g$ to mean that there exists a constant $C$ and a $t_0$ so that, $f(t) \geq Cg(t)$, for all $t \geq t_0$.
In Table \ref{table: notation} we provide a guide to some of the notation that appears in this paper. 
Consider the group $G=\mathbb{Z}_2\ast \mathbb{Z}_k$, where $3\leq k\leq\infty$.  We use the convention that when $k=\infty$,  $G=\mathbb{Z}_2\ast\mathbb{Z}$. Denote  the generator of $\mathbb{Z}_{2}$   by   $a$ and the generator of  $\mathbb{Z}_k$ by   $b$.  An element $g \in G$ is {\it primitive} if it is not a non-trivial power of another element of $G$. 
The {\it word length}  of $g$,  denoted  $|g|$, is  the minimum length among all words representing $g$ using the symmetric set of generators $\big\{a,b,b^{-1}\big\}$. For the purpose of counting, we use the convention that for each finite $k$ the exponents on $b$ come from the following set of consecutive nonzero integers.

\[E_k=\left\{l \in\mathbb{Z}-\{0\} : -\left\lfloor\frac{k-1}{2}\right\rfloor\leq l \leq \left\lfloor\frac{k}{2}\right\rfloor\right\}\]
 
For example, $E_3=\{-1,1\}$, $E_4=\{-1,1,2\}$, and $E_5=\{-2,-1,1,2\}$.  In addition, when  $k=\infty$  we define $E_{\infty}=\mathbb{Z}-\{0\}$.  Set  $\mathcal{W}= \{$reduced words in the generators of $G\}$, that is,  where the exponent of $a$ is always $+1$ and the exponent of $b$ is always an element of $E_k$. The conjugacy class of $g \in G$ is denoted $[g]$. 
For a positive integer  $s$, since conjugation  commutes with taking powers,  we may define $[g]^{s}:=[g^{s}]$. 
The {\it length of a conjugacy class}  $[g]$ is given by
$|[g]|  = min \{| h | :h\in [g] \}$. A word in 
$\mathcal{W}$ is {\it cyclically reduced} if any cyclic permutation of it is a reduced word. Though cyclically reduced words  in a conjugacy class are not unique they do realize the minimum length in the conjugacy class.  In fact, all conjugates of a cyclically reduced word are cyclic permutations of each other.  For the basics on combinatorial group theory see \cite{LyndSch, Mag}.

We call a reduced word that begins with $a$ and ends with $b^x, x\in E_k$ an $(ab)$-word.  Similarly, we have $(aa)$-, $(bb)$-, $(ba)$-words. We remark the obvious but important fact that an $(ab)$- or $(ba)$-word is cyclically reduced but an $(aa)$-word is not. Some elementary facts  are assembled in the following lemma (see  \cite{BasSuz}    for more details).

 \begin{lem}  \label{lem: conjugates}
 Let $w \in \mathcal{W}$ where $w$ is not conjugate to $a$ or a power of $b$. Then 
 \begin{enumerate}
  \item  $w$ is conjugate to an (ab)-word $y$ with 
  $\lVert w\rVert \geq \lVert y\rVert$.
  \item The only conjugates of the word 
  $ab^{x_0}\dots ab^{x_{n-1}}$, $x_i\in E_k$,  that are   (ab)-words are its even cyclic   permutations. That is, $ab^{x_{n-1}} ab^{x_0}\dots ab^{x_{n-2}}$
  and so on. 
  \item  If $y$ is an (ab)-word and $w^s=y$ for 
  $s$ a positive integer, then $w$ is an (ab)-word  and $s\lVert w\rVert=\lVert y\rVert$.  
  \item   If  $[w]^s=[y]$ then 
  $s \lVert [w]\rVert=\lVert[y]\rVert$.
  \item  In the group $\mathbb{Z}_2\ast \mathbb{Z}_k, 3\leq k\leq\infty$ each infinite order element is a positive  power of a unique, primitive element and every infinite order element is contained in a maximal cyclic subgroup of $G$.

\end{enumerate}
\end{lem}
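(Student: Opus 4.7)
The main tool throughout is the standard theory of cyclic reduction in a free product: every element of $G=\mathbb{Z}_2\ast\mathbb{Z}_k$ has a cyclically reduced conjugate, unique up to cyclic permutation, and any such cyclically reduced representative achieves the minimum word length in the conjugacy class. I will use this repeatedly, together with the fact that in a free product, two cyclically reduced words are conjugate if and only if they are cyclic permutations of each other.

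\emph{Parts (1) and (2).} For (1), I would start from $w$ and iteratively conjugate by its first letter whenever the first and last syllables lie in the same free factor and cancel, producing a cyclically reduced conjugate $w'$ with $|w'|\leq|w|$. Since $w$ is not conjugate to $a$ or to a power of $b$, the word $w'$ cannot consist entirely of $a$-letters or entirely of $b$-letters, so it contains syllables from both $\mathbb{Z}_2$ and $\mathbb{Z}_k$. The cyclically reduced word $w'$ must then alternate $a$ and $b^x$ (since consecutive syllables in the same factor would collapse), and it has even length $2n$. A cyclic rotation then places it in the form $ab^{x_0}\cdots ab^{x_{n-1}}$, giving the required $(ab)$-word $y$. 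For (2), given $y=ab^{x_0}\cdots ab^{x_{n-1}}$, I would simply list its $2n$ cyclic permutations: those starting at an even index still start with $a$ and end with some $b^{x_i}$, and so are $(ab)$-words; those starting at an odd index start with $b^{x_i}$ and end with $a$, hence are $(ba)$-words. Since all conjugates of a cyclically reduced word that are themselves cyclically reduced are cyclic permutations, no other conjugate can be an $(ab)$-word.

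\emph{Parts (3) and (4).} For (3), I would write $w$ in reduced form and observe that if the concatenation $w^s=w\cdots w$ is reduced (as the $(ab)$-word $y$ is), then there is no cancellation between consecutive copies of $w$, forcing $w$ to be cyclically reduced and forcing the last syllable of $w$ and the first syllable of $w$ to lie in different free factors. Reading off the first and last syllables of $y=w^s$ then shows $w$ begins with $a$ and ends with some $b^x$, so $w$ is an $(ab)$-word; additivity of length under concatenation gives $s|w|=|y|$. For (4), pick a cyclically reduced representative of $[w]$ of length $|[w]|$; by the previous argument applied to its $s$-th power (which is automatically cyclically reduced), the conclusion $s|[w]|=|[y]|$ follows after noting (from (2)) that the $s$-th power sits in the conjugacy class $[y]$ and attains the minimum length there.

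\emph{Part (5).} For the existence and uniqueness of a primitive root, I would use the standard fact that in a free product of cyclic groups, the centralizer of any infinite order element is infinite cyclic; this can be deduced directly from the normal-form arguments above by showing that if two infinite order elements $u,v$ commute, then some powers $u^p$ and $v^q$ agree, and cyclic-reduction of the equation $u^p=v^q$ forces $u$ and $v$ to share a common cyclic-permutation-class of cyclically reduced root. The generator of the maximal cyclic subgroup containing a given infinite-order element $g$ is then the unique primitive element of which $g$ is a positive power (after replacing the generator by its inverse if necessary). The main obstacle here is really just assembling the free-product normal-form facts cleanly; once the normal form and no-cancellation-in-powers results are in hand, each item reduces to a short combinatorial check.
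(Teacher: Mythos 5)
Your proposal is correct. Note that the paper itself gives no proof of this lemma: it simply labels the items as elementary facts and defers to the reference \cite{BasSuz}, so there is no in-paper argument to compare against. What you supply is the standard free-product toolkit (uniqueness of normal forms, cyclic reduction, the conjugacy criterion for cyclically reduced elements, and cyclic centralizers of infinite-order elements), and each of the five items does reduce to a short check once those are in place, exactly as you say. The only spot where your write-up is slightly circular rather than wrong is in part (3): you assert that the concatenation $w\cdots w$ ``is reduced (as the $(ab)$-word $y$ is),'' but that is the thing to be established. The clean version is to write $w=uvu^{-1}$ with $v$ cyclically reduced (of syllable length at least $2$, using the hypothesis that $w$ is not conjugate to $a$ or a power of $b$ and the fact that $w$ has infinite order since $y$ does), so that $w^{s}=uv^{s}u^{-1}$ is already in reduced form; since $y$ is cyclically reduced, $u$ must be trivial, and then the first and last syllables of $y$ are those of $w$, giving both that $w$ is an $(ab)$-word and the additivity $s\lVert w\rVert=\lVert y\rVert$. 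With that small repair the argument is complete, and part (4) follows from it exactly as you indicate by passing to an $(ab)$-word representative realizing $\lVert[w]\rVert$.
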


Using  the  convention that   exponents of $b$ must lie in $E_{k}$, 
we have 
$$
\| w \| = n+ \sum_{i=0}^{n-1}|x_i|,  \text{    for   } w=ab^{x_0}ab^{x_1}\dots ab^{x_{n-1}}.
$$

Note that $n$ is the number of $a's$ that appear in $w$.

%%%Outline of proof%%%%%

\section{Outline of proof}
\label{countingbinarywords}

For $p$ an even order cone point on the Hecke surface $X_{k}$, our interest is in   counting conjugacy classes of reciprocal hyperbolic elements based at $p$ in  the Hecke group 
$\Gamma_{k}$ (see section 1 for the definiton); that is, reciprocal geodesics  based at  $p$  or equivalently 
conjugacy classes of reciprocal words  in the abstract group 
$\mathbb{Z}_2 \ast \mathbb{Z}_k $, for $3\leq k\leq\infty$.

The  counting problem for reciprocal geodesics in a  Hecke surface starts out in a similar fashion as  the counting problem for the modular surface ($k=3$ case).  Namely, we look for a normal form in $\mathbb{Z}_2 \ast \mathbb{Z}_k$ of the conjugacy class that represents a reciprocal geodesic on the surface. This is the set of elements $\mathcal{N}$ discussed in 
Lemma \ref{lem: normal forms}. For $k$ odd a reciprocal word is the power of a reciprocal primitive word, and the number of conjugate elements in normal form is 2.
However, in the  $k \geq 4$ even case
there are two complicating features. The first is that there exist non-primitive reciprocal geodesics  based at $p \in X_{k}$ that are not the power of a primitive reciprocal geodesic based at $p$.  In this case, our standard argument for finding the growth rate of primitive geodesics dominating the non-primitive geodesics breaks down.  One such example  of this phenomenon occurs with  the  word 
$ab^{2}$ in $\mathbb{Z}_{2} \ast  \mathbb{Z}_{4}$.  This word 
is not a reciprocal word but its power  $(ab^{2})^{2}=ab^2ab^{-2}$ 
is a reciprocal word.  Note that $ab^{2}$ corresponds to a closed geodesic that passes through both cone points.

The second complicating feature has to do with 
deciding  how many normal forms are conjugate.
In the case of the  modular surface,  and in fact for $k$ odd,  it is precisely two  (see \cite{BasSuz, Mar}). 
However, for Hecke groups  when  $k$ 
is even there are  normal forms that are not conjugate to any other normal form.  Returning to the   example in 
 $\mathbb{Z}_{2} \ast  \mathbb{Z}_{4}$, the normal form of a reciprocal word based at the order two cone point is 
 $[a,h]$, $h$ a $(bb)$-word. Hence, 
$(ab^{2})^{2}=ab^2ab^{-2}$ is a reciprocal word in normal form and it is easy to see there is no other conjugate one in this form.   On the other hand,  $abab^{-1}$ and $ab^{-1}ab$ are conjugate  reciprocal words  based at the order 2  in normal form. While this may not be  a concern for rough bounds it is a serious concern to determine whether an asymptotic growth rate exists and what it in fact is.

 %%%%%%%%%%%a-Types and b-Types%%%%%%%%%%%
\section{Normal forms}\label{sec: normal forms}

If $k$ is odd then there is only one type of reciprocal geodesic 
on $X_{k}$, namely the ones based at the order 2 cone point. 

%\begin{definition}
%Let $G=\mathbb{Z}_2\ast\mathbb{Z}_k$, where $3\leq k\leq \infty$.
%\begin{itemize}
%\item
%A word in $\mathcal{N}_{2t,2n}$ is said to be $a$-type if it is the power of a primitive word of the form $agag^{-1}=[a,g]$, where $g$ is a $(bb)$-word. 
%\item
% A word in $\mathcal{N}_{2t,2n}$ is said to be  $b$-type  if  $k=2m$ and it is the even power of a primitive word of the form $ahb^mh^{-1}$, where $h$ is a $(ba)$-word or the identity.  
% \end{itemize}
%\end{definition}

\begin{lem}[Normal forms]\label{lem: normal forms}
  Let $a$ be  the generator for  
$\mathbb{Z}_{2}$ and  $b$ the  generator for  $\mathbb{Z}_{k}$. We consider the 
symmetric generating set $\{a, b, b^{-1}\}$ for 
$\mathbb{Z}_{2} \ast \mathbb{Z}_{k}$. Let $p$ be an even order cone point of $X_{k}$.
A  reciprocal geodesic based at $p \in X_{k}$ corresponds to the conjugacy class of a reciprocal hyperbolic  element $w \in \mathbb{Z}_{2} \ast \mathbb{Z}_{k}$ where

\begin{enumerate}
\item if $p$ is the cone point of order two then $w$ is the  product of two distinct  involutions in the conjugacy class of  $a$ 
 and can be  conjugated to the  normal form  $[a,h]$ where
 $h$ is a   (bb)-word of the form
\begin{equation*} 
b^{x_0}\dots ab^{x_{n-1}}
\end{equation*}

\item if $p$ is the cone point of order $k=2m$ then $w$ is the product of two distinct involutions in the conjugacy class of  $b^{m}$ 
and  can be conjugated to the form
$[b^{m}, h]$ where $h$  is an (aa)-word of the form
\begin{equation*}
ab^{x_0}\dots ab^{x_{n-1}}a
\end{equation*}
\end{enumerate}

the exponents   $x_i \in E_k$. 

\end{lem}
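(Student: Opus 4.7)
The plan is to handle cases (1) and (2) uniformly by writing $s$ for the torsion involution ($s=a$ in case (1), $s=b^{m}$ in case (2)); in both cases $s=s^{-1}$.

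First I would rewrite $w$ as a commutator. Since $w=i_{1}i_{2}$ is a product of two distinct involutions conjugate to $s$, I may conjugate $w$ within its class so that $i_{1}=s$; then $i_{2}=hsh^{-1}$ for some $h\in G$, and using $s=s^{-1}$,
\[
w \;=\; s\cdot hsh^{-1} \;=\; shs^{-1}h^{-1} \;=\; [s,h].
\]
Note that $h\notin Z(s)$, since otherwise $i_{1}=i_{2}$, contradicting distinctness.

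Next I would identify the freedom available in the choice of $h$. The description $i_{2}=hsh^{-1}$ depends only on the coset $hZ(s)$, so replacing $h$ by $hz$ with $z\in Z(s)$ leaves $w$ unchanged; a direct calculation confirms $[s,hz]=[s,h]$. Separately, conjugating $w$ by any $z\in Z(s)$ yields $z[s,h]z^{-1}=[s,zh]$, another commutator of the same shape, so I may also replace $h$ by $zh$ without leaving the conjugacy class of $w$. Combining these, I may replace $h$ by any element of the double coset $Z(s)\,h\,Z(s)$. In $G=\mathbb{Z}_{2}*\mathbb{Z}_{k}$, the relevant centralizers are $Z(a)=\{1,a\}$ and $Z(b^{m})=\langle b\rangle$.

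The main step is a case-by-case normalization of $h$ using this freedom. In case (1), the double coset is $\{h,ah,ha,aha\}$, which has four distinct elements (since $h\notin Z(a)$); depending on whether the first/last letter of the reduced word $h$ is an $a$ or a $b$-power, exactly one of these four representatives is a $(bb)$-word --- schematically $h$, $ah$, $ha$, or $aha$ according as $h$ has type $(bb)$, $(ab)$, $(ba)$, or $(aa)$ --- which is the desired normal form. In case (2), $Z(b^{m})=\langle b\rangle$ is large enough to normalize on either side independently: if the reduced form of $h$ begins with $b^{y}$ I left-multiply by $b^{-y}$, and if it ends with $b^{z}$ I right-multiply by $b^{-z}$; the outcome is a word beginning and ending with $a$, i.e., an $(aa)$-word of the stated form.

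The only thing to check is the degenerate possibility that after stripping the word becomes empty, but that would force $h\in Z(s)$, again contradicting the distinctness of the two involutions. A direct inspection shows that the resulting $[s,h]$ is a cyclically reduced word of the exact form claimed in the lemma, completing the proof.
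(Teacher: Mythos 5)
Your argument is correct, and it is worth noting that the paper itself gives no proof of this lemma: it is stated bare, with the underlying facts implicitly deferred to the treatments in \cite{BasSuz}, \cite{ErSo2}, and \cite{Sar} (which are cited later, in the proof of Proposition \ref{prop: normal form based at order 2}, for the closely related count of conjugate normal forms). Your route --- write $w=[s,h]$ with $s\in\{a,b^{m}\}$ after conjugating so that the first involution is $s$ itself, observe that $h$ may be replaced by any element of the double coset $Z(s)\,h\,Z(s)$ without leaving the conjugacy class or the commutator shape, and then use $Z(a)=\{1,a\}$ (respectively $Z(b^{m})=\langle b\rangle$) to normalize $h$ to a $(bb)$-word (respectively an $(aa)$-word) --- is a clean, self-contained derivation, and the same double-coset bookkeeping immediately explains why there are exactly two conjugate normal forms in the generic case (the even cyclic permutations / inverse-$h$ phenomenon of Proposition \ref{prop: normal form based at order 2}), so it dovetails with what the paper does next. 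The small checks you flag are the right ones: distinctness of the two involutions is exactly what rules out $h\in Z(s)$ and hence the degenerate empty word, and the four double-coset representatives $h,ah,ha,aha$ sort by the $(aa)/(ab)/(ba)/(bb)$ type of $h$ so that precisely one is a $(bb)$-word. The only cosmetic caveat is that in case (2) the exponent $-m$ produced by stripping should be rewritten as $m$ (since $b^{-m}=b^{m}$) to respect the convention that exponents lie in $E_{k}$; this does not affect the argument.
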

The  full set of normal forms for reciprocal geodesics based at the order two cone point is

\begin{equation*} \{[a,h] : h\,\, \textrm{a $(bb)$-word}\}.
\end{equation*} 

and in the $k=2m$ even case, the   full set of normal forms for reciprocal geodesics based at the order $k$  cone point is

\begin{equation*} \{[b^{m},h] : h\,\, \textrm{an $(aa)$-word}\}.
\end{equation*}

Let $p$ be an even order cone point in $X_{k}=\mathbb{H}/\Gamma_{k}.$ Then the set of reciprocal words in 
$G=\mathbb{Z}_2 \ast \mathbb{Z}_k$ representing a reciprocal geodesic based at $p$ is denoted $\mathcal{R}$, and the normal forms are denoted  $\mathcal{N}$.

We first note that if $[a,h]$  is primitive in 
$\mathbb{Z}_2\ast\mathbb{Z}_k$, $h$ a $(bb)$ word, then $h$ is not of order two. 
Similarly, if $[b^{m}, h]$ is primitive in 
$\mathbb{Z}_2\ast\mathbb{Z}_k$, $h$ an $(aa)$ word,  then $h$ is not of order 2.
 
 If  $k=2m$  then a  closed geodesic that passes through both cone points is not necessarily a reciprocal geodesic. For example, 
 $AB^{2}$  in the Hecke group $\Gamma_{4}$ is a hyperbolic element whose axis   passes through both cone points but it does not represent a reciprocal geodesic. 
  In fact, we have

 \begin{lem}\label{lem: passing through both cone points}
 Let $k=2m$ and  $\gamma$  be a closed geodesic on $X_{2m}$ that passes through both cone points. 
 Then $\gamma$ is a reciprocal geodesic based at the cone point of order 2  if and only if it is a reciprocal geodesic based at the order $k$ cone point. 
 Moreover, $\gamma$  corresponds to a word that can be conjugated to  the form $(agb^{m}g^{-1})^{2q}$, where  $g$ is a (ba)-word  or the identity, and $q$ is a positive integer. Equivalently it corresponds to the conjugacy class of a word of the  form $(b^{m}g^{-1}ag)^{2q}$.
 \end{lem}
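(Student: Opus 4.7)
The plan is to exploit the correspondence between the axis of $w$ passing through a cone point and the existence of an inverting involution of the appropriate conjugacy class, and then to analyze the product of two such inverting involutions. Write $w = w_0^r$ with $w_0$ primitive. The hypothesis that the axis passes through an order-$2$ and an order-$k$ cone point yields involutions $\iota \sim a$ and $\sigma \sim b^m$ satisfying $\iota w_0 \iota = \sigma w_0 \sigma = w_0^{-1}$. A direct computation (e.g.\ $\iota\sigma w_0 = \iota w_0^{-1}\sigma = w_0\iota\sigma$) shows that $\iota\sigma$ commutes with $w_0$, so by the fact that the centralizer of a hyperbolic element of $\Gamma_k$ equals the maximal cyclic subgroup $\langle w_0\rangle$ (Lemma~\ref{lem: conjugates}(5)), we have $\iota\sigma = w_0^s$ for some $s\in\mathbb{Z}$. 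The crucial step is to show $s$ is odd: if $s = 2q$, then $v := w_0^q$ satisfies $v^{-1}\iota v = \iota w_0^{2q} = \iota(\iota\sigma) = \sigma$ (using the relation $w_0^{-q}\iota = \iota w_0^q$), which would force $a$ and $b^m$ to be conjugate in $\mathbb{Z}_2 * \mathbb{Z}_{2m}$, contradicting the standard fact that torsion elements from distinct free-product factors are non-conjugate.

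Next I would classify every involution fixing a point on the axis as $\tau_n := \iota w_0^n$ for $n\in\mathbb{Z}$. Each $\tau_n$ is an involution (since $\iota$ inverts $w_0$), and conversely any such involution multiplied by $\iota$ lies in $\langle w_0\rangle$. The fixed point of $\tau_n$ lies at signed axis-distance $nT/2$ from that of $\iota$, where $T$ is the translation length of $w_0$. Since $\tau_0 = \iota$ and $\tau_s = \sigma$, the $\langle w_0\rangle$-orbits of the two cone-point lifts occupy the positions $\{nT\}$ (type $a$) and $\{(s+2n)T/2\}$ (type $b^m$); because $s$ is odd these are precisely the even and odd multiples of $T/2$. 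Hence $\tau_n \sim a$ iff $n$ is even. Applying this to $w = w_0^r = \iota\cdot\tau_r$ shows reciprocity at order $2$ is equivalent to $\tau_r \sim a$, i.e.\ $r$ even; and $w = \sigma\cdot\tau_{s+r}$ shows reciprocity at order $k$ is equivalent to $s+r$ odd, which (as $s$ is odd) is again $r$ even. Both reciprocity conditions therefore reduce to $r = 2q$ for some positive integer $q$, giving the ``if and only if''.

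For the normal form set $r = 2q$. The primitive element $w_0 = \tau_0\tau_1 = \iota\cdot(\iota w_0)$ is a product of an $a$-type and a $b^m$-type involution. Writing $\iota = hah^{-1}$ and $\iota w_0 = k b^m k^{-1}$ and conjugating by $h^{-1}$ gives $w_0$ conjugate to $agb^m g^{-1}$ with $g = h^{-1}k$. Requiring $agb^m g^{-1}$ to be cyclically reduced means no cancellation at the junctions $a\cdot g$, $g\cdot b^m$, $b^m\cdot g^{-1}$, or across the cyclic wrap $g^{-1}\cdot a$; this forces $g$ to begin with $b^{\pm 1}$ and end with $a$, i.e.\ $g$ is a $(ba)$-word or the identity. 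Raising to the $2q$-th power yields $w$ conjugate to $(agb^m g^{-1})^{2q}$. The equivalent form $(b^m g^{-1} a g)^{2q}$ arises as the conjugate by $(ag)^{-1}$: with $u = ag$, $v = b^m g^{-1}$, one has $(uv)^{2q}$ and $(vu)^{2q}$ in the same conjugacy class. The main obstacle is the parity argument of the first paragraph: producing $s$, proving it is odd, and then using oddness to identify the conjugacy type of every axis-involution $\tau_n$ by the parity of its position index. Once the $\tau_n$ are classified in this way, both the equivalence of the two reciprocity conditions and the explicit normal form follow routinely.
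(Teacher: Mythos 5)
Your argument is correct and follows essentially the same route as the paper: identify the inverting involutions of the two conjugacy types on the axis, observe that they alternate along the axis so that the primitive generator of the stabilizer has the form $agb^{m}g^{-1}$, and conclude that reciprocity (of either type) forces an even power. The only substantive difference is that you actually prove the alternation --- via the parity of $s$ in $\iota\sigma=w_0^{s}$ and the non-conjugacy of $a$ and $b^{m}$ in the free product --- whereas the paper simply asserts that the order-$2$ and order-$k$ fixed points must alternate; your version is a welcome tightening of that step.
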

 
 Words that represent reciprocal geodesics that pass through both cone points we call {\it $\mathcal{B}$-type words.} and we denote them by
 
 \[\mathcal{B}_{2t}:=\{\gamma \subset X_{2m} \text{ a reciprocal geodesic  passing through both cone points}: |\gamma|=2t\}\] 
\[\overset{1-1}{\longleftrightarrow}\{\omega=(agb^mg^{-1})^{2q}:\textrm{$g$ is a $(ba)$-word or id}, q\in\mathbb{N}, |\omega|=2t\},\]   

 \begin{proof}[Proof of Lemma \ref{lem: passing through both cone points}]
 If $w$ represents the hyperbolic element corresponding to 
 $\gamma$ then its axis passes through infinitely many  order $2$ fixed points as well as  infinitely many order $k$ fixed points of elements  in $\Gamma_{k}$.  Moreover, the order two  and $k$ fixed points must alternate along the axis. Hence  a primitive element that generates the stabilizer of this axis is of the form $agb^{m}g^{-1}$. Necessarily it is an even power for otherwise, it is not the product of two conjugate involutions.
 \end{proof}

We next give a rough growth count of  the reciprocal geodesics
in $X_{2m}$ that pass through both cone points.  As noted  in Lemma \ref{lem: passing through both cone points} the length of any such reciprocal geodesic is even. With this in mind we have

\begin{lem} \label{lem:Reciprocal thru both cone points}
For $\alpha > \sqrt{2}$
\begin{equation}
\lim_{t\to\infty}\dfrac{|\mathcal{B}_{2t}|}{\alpha^{t}}=0.
\end{equation}
\end{lem}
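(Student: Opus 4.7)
The strategy is to exploit the algebraic rigidity provided by the bijection preceding the lemma: every $\gamma\in\mathcal{B}_{2t}$ is encoded by a pair $(g,q)$ with $g$ a $(ba)$-word or identity and $q\in\mathbb{N}$, subject to the length constraint
\[
2t \;=\; |\omega| \;=\; 2q\bigl(1 + 2|g| + m\bigr),
\]
equivalently $|g| = \tfrac{1}{2}\bigl(t/q - 1 - m\bigr)$. In particular $|g| \leq (t-1-m)/2 < t/2$, with the upper bound essentially attained only at $q=1$. The heuristic behind the whole argument is then transparent: the free parameter $g$ has length at most $t/2$, and the number of $(ba)$-words of a given length grows at rate at most $2$, so $|\mathcal{B}_{2t}|$ should grow at rate at most $(\sqrt{2})^t$, which is $o(\alpha^t)$ for every $\alpha>\sqrt{2}$.

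First I would bound the number $N(\ell)$ of $(ba)$-words of length $\ell$ by an elementary automaton argument. Reduced words in the alphabet $\{a,b,b^{-1}\}$ form a regular language: after each letter, reduction rules out exactly one of the three generators as the next letter, leaving at most two legal successors. Hence the total number of reduced words of length $\ell$ is at most $3\cdot 2^{\ell-1}$, and a fortiori $N(\ell)\leq 3\cdot 2^{\ell-1}$. This crude bound is uniform in $k$, so no case analysis on $m$ is needed; any additional constraints arising in the finite-$k$ setting (e.g.\ $b^m$ being the largest admissible block) only cut down the count.

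Combining the two ingredients, for each admissible $q$ the pair $(g,q)$ contributes at most $N\bigl(\tfrac{t/q-1-m}{2}\bigr)$ words, so
\[
|\mathcal{B}_{2t}| \;\leq\; \sum_{q=1}^{\lfloor t/(1+m)\rfloor} N\!\left(\frac{t/q-1-m}{2}\right) \;\leq\; C\sum_{q=1}^{t} 2^{(t/q-1-m)/2}.
\]
The $q=1$ term is $O\bigl((\sqrt{2})^t\bigr)$ and dominates; for $q\geq 2$ each term is at most $O(2^{t/4})$, giving a total of $O(t\cdot 2^{t/4}) = o\bigl((\sqrt{2})^t\bigr)$. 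Therefore $|\mathcal{B}_{2t}| = O\bigl((\sqrt{2})^t\bigr)$, and for $\alpha>\sqrt{2}$ the ratio $|\mathcal{B}_{2t}|/\alpha^t \leq C(\sqrt{2}/\alpha)^t \to 0$.

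The main organizational point---rather than a real obstacle---is checking that the sum over $q$ is genuinely controlled by the $q=1$ term, which is immediate since $q\mapsto 2^{t/(2q)}$ is decreasing; the fact that not every $q$ yields an integral $|g|$ is absorbed harmlessly into the inequalities. Consequently no sharp counting of $(ba)$-words is needed: the crude automaton bound $N(\ell)\leq 3\cdot 2^{\ell-1}$ already furnishes the correct base $\sqrt{2}$, and any refinement would only improve constants.
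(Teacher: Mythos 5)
Your proof is correct and follows essentially the same route as the paper: both exploit the bijection with pairs $(g,q)$, observe that the length constraint forces $|g|\le t/2$, and bound the number of $(ba)$-words of length $\ell$ by $O(2^{\ell})$ (the paper via the composition count $\mathcal{C}_n(\ell-n)2^n$ summed over $n$, you via the reduced-word automaton), yielding $|\mathcal{B}_{2t}|=O(\mathrm{poly}(t)\cdot(\sqrt{2})^{t})=o(\alpha^{t})$. If anything, your explicit summation over $q$ is slightly more careful than the paper's write-up, which treats $\mathcal{B}_{2t}$ as if it corresponded to a single value of $s=|g|$; the resulting extra polynomial factor is harmless in either version.
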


\begin{proof}
Set $s=\frac{t-q(m+1)}{2q}$ and 

 \[
 \mathcal{Z}_{2s}=\left\{g:  (ba)-\textrm{word or id } \text{ with }
  |g|=s \right\}
 \]
and note that this set is in 1-1 correspondence with 
$\mathcal{B}_{2t}$ by Lemma \ref{lem: passing through both cone points}.
Now consider a non-identity element $g\in\mathcal{Z}_{2s}$, where $g=b^{x_0}ab^{x_1}a\dots b^{x_{n-1}}a$ and $|g|=s$.  It follows that $\sum_{i=0}^{n-1} |x_i|=s-n$.  Denoting   the number of compositions of the positive integer $r$ with $n$ parts by  $\mathcal{C}_{n}(r)$,  note that there are 
 $\mathcal{C}_{n}(s-n) 2^{n}$ such $g$ since each part can be positive or negative. Using the fact  that 
  $\mathcal{C}_{n}(s-n) \leq 2^{s-n-1}$ and 
  $1 \leq n \leq \lfloor\frac{s}{2}\rfloor$ we have

\begin{equation*}
|\mathcal{B}_{2t}|=|\mathcal{Z}_{2s}|\leq\displaystyle
\sum_{n=1}^{\lfloor\frac{s}{2}\rfloor}\mathcal{C}_{n}(s-n)2^n=\displaystyle\sum_{n=1}^{\lfloor\frac{s}{2}\rfloor}2^{s-n-1}2^n=\displaystyle\sum_{n=1}^{\lfloor\frac{s}{2}\rfloor}2^{s-1}\leq  \left(\frac{s}{4}\right)2^{s}
\leq \left( \frac{t}{8}\right) (\sqrt{2})^{t}
\end{equation*}
where the last inequality follows from 
$s=|g|\leq \frac{t}{2}$.  Finally we have

\[\displaystyle\lim_{t\to\infty}\dfrac{|\mathcal{B}_{2t}|}{\alpha^{t}}=\displaystyle\lim_{t\to\infty}\dfrac{|\mathcal{Z}_{2t}|}{\alpha^{t}}\leq\displaystyle\lim_{t\to\infty}\dfrac{\frac{t}{8}(\sqrt{2})^{t}}{\alpha^t}=\displaystyle\lim_{t\to\infty}\frac{t}{8}\left(\frac{\sqrt{2}}{\alpha}\right)^t=0.\]
since  by assumption $\alpha>\sqrt{2}$.
\end{proof}

\begin{prop}[Based at order 2]  \label{prop: normal form based at order 2}
Consider $G=\mathbb{Z}_2\ast\mathbb{Z}_k$ and suppose $w \in G$ corresponds to  a reciprocal geodesic based at the order 2 cone point.  Then either 
\begin{enumerate}
\item   $w$  is conjugate to a power of a primitive element of the form $[a,g]$, $g$ a (bb)-word,  and it 
     has exactly two  representatives  in the normal form $\mathcal{N}$. Namely, the two conjugates in $\mathcal{N}$ are $[a,h]^n$ and $[a,h^{-1}]^n$,  where $[a, h]$ is primitive in $G$, $n$ is a unique positive integer, and $h$ is a unique (bb)-word not of order 2, or 
      \item $k=2m$ and $w$  is conjugate to the even power of a word of the form $agb^{m}g^{-1}$,  $g$ a (ba)-word or the identity.  That is, $w$ is a $\mathcal{B}$-type reciprocal word. 
 \end{enumerate}
 \end{prop}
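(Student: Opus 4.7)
The plan is to reduce to analyzing the primitive root of $w$. By Lemma \ref{lem: normal forms} we may assume $w=[a,h]$ in normal form with $h$ a $(bb)$-word, and by Lemma \ref{lem: conjugates}(5) write $w=v^n$ with $v$ primitive. Since $w=a\cdot(hah^{-1})$ is a product of two $a$-conjugate involutions, direct computation yields $awa^{-1}=w^{-1}$, and uniqueness of the primitive root (both $ava^{-1}$ and $v^{-1}$ are primitive with equal $n$-th powers) forces $ava^{-1}=v^{-1}$. Then $(av)^2=a(ava)v=v^{-1}v=1$, so $\iota:=av$ is an involution and $v=a\cdot \iota$ expresses $v$ itself as a product of two involutions.

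The proof now splits according to the conjugacy class of $\iota$ in $G$. If $\iota$ is conjugate to $a$, then $v$ is the product of two $a$-conjugate involutions, so by Lemma \ref{lem: normal forms} (combined with primitivity of $v$) it is conjugate to some $[a,\tilde h]$ with $\tilde h$ a $(bb)$-word not of order two; this is case (1). If $\iota$ is conjugate to $b^m$ (possible only when $k=2m$), then $v$ is the product of an $a$-conjugate and a $b^m$-conjugate involution, so by Lemma \ref{lem: passing through both cone points} it is conjugate to $agb^mg^{-1}$ for some $(ba)$-word or identity $g$. A geometric argument on the axis of $v$ — the fixed points of $a$-conjugate and $b^m$-conjugate involutions alternate at equal spacings — shows $v^n$ decomposes as a product of two $a$-conjugate involutions exactly when $n$ is even. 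So $n=2q$ and $w=(agb^mg^{-1})^{2q}$ is a $\mathcal B$-type word, yielding case (2).

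It remains to establish the count of normal form representatives in case (1). Uniqueness of $n$ is forced by uniqueness of primitive roots. Uniqueness of $\tilde h$ follows from the fact that the map $g\mapsto [a,g]$ from $(bb)$-words to $\mathcal{N}$ is injective: $[a,g_1]=[a,g_2]$ forces $g_2^{-1}g_1\in C_G(a)=\{1,a\}$, and the alternative $g_1=g_2a$ pushes $g_1$ out of $(bb)$-form. For the count of exactly two conjugates, one works with the $(ab)$-word representation of $[a,\tilde h]^n$, whose exponent sequence is the $n$-fold repetition of the anti-palindromic sequence of length $2p$ coming from $[a,\tilde h]$ (here $p$ is the number of $b$-blocks in $\tilde h$). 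By Lemma \ref{lem: conjugates}(2), conjugates are obtained by even cyclic rotations; such a rotation produces an element of $\mathcal N$ iff the rotated sequence is again anti-palindromic. Primitivity of $[a,\tilde h]$ ensures the sequence has full period $2p$, so the anti-palindromy condition, taken modulo the centralizer action (shift by $2p$, since $v$ centralizes $w$), isolates exactly two rotations — the trivial one and the shift by $p$ — corresponding respectively to $[a,\tilde h]^n$ and $[a,\tilde h^{-1}]^n$. These are distinct because $\tilde h \neq \tilde h^{-1}$ as $\tilde h$ is not of order two.

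The main obstacle is this last combinatorial step: one must carefully track the interplay between cyclic-rotation equivalence (which produces potentially many anti-palindromic rotations of $[a,\tilde h]^n$) and the centralizer equivalence (which identifies rotations differing by $2p$). Primitivity of $[a,\tilde h]$ is the crucial ingredient ruling out any hidden shorter period that could inflate the count beyond two, and some care is also needed to confirm explicitly that $[a,\tilde h^{-1}]^n$ can be written in the form $[a,g]$ for a $(bb)$-word $g$, so that it genuinely lies in $\mathcal N$.
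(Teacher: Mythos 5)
Your argument is correct and lands on the same two-case division as the paper, but it gets there by a more self-contained route. The paper's own proof is very short: it splits on whether the underlying geodesic passes through the order-$k$ cone point (invoking Lemma \ref{lem: passing through both cone points} for the $\mathcal{B}$-type case) and defers the key claim --- that a non-$\mathcal{B}$-type class has exactly two representatives in $\mathcal{N}$ --- to the references \cite{BasSuz, ErSo2, Sar}. You instead pass to the primitive root $v$ of $w$, derive $ava^{-1}=v^{-1}$ from uniqueness of primitive roots, and split on the conjugacy class of the involution $av$ (an algebraic reformulation of the same dichotomy, since involutions in $\mathbb{Z}_2\ast\mathbb{Z}_k$ are conjugate into a factor); you then prove the two-representative count directly, by observing that normal forms are exactly the anti-palindromic $(ab)$-words, that conjugates in $\mathcal{N}$ arise from even cyclic rotations, and that primitivity forces the minimal period of the exponent sequence to be $2p$, so that the anti-palindromy condition $2r\equiv 0\pmod{2p}$ leaves only $r=0$ and $r=p$. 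What your version buys is a proof with no external citation and a transparent accounting of where primitivity and $h^2\neq 1$ are used; what the paper's version buys is brevity. The two spots where you are sketchiest --- the parity claim in case (2), which reduces to the infinite dihedral group $\langle a, \iota\rangle$ acting on the axis, and the check that an anti-palindromic rotation is literally of the form $[a,g]$ with $g$ a $(bb)$-word --- are both routine and your outlines of them are sound, so I see no genuine gap.
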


  \begin{proof}  If $k$ is odd then a  reciprocal geodesic based at the order two cone point  is the power of a reciprocal geodesic,  and hence has the normal form $[a,g]$ as stated. Similarly, 
  if $k=2m$ is even and the reciprocal geodesic based at the order two does not pass through the order $k$ cone point then it can be put in the normal form $[a,g]$ where $g$ is not conjugate to $b^{m}$.  The fact that there are two conjugates in  this normal form follows in the same way as in the papers \cite{BasSuz, ErSo2, Sar}
  
  On the other hand,  if the reciprocal geodesic based at the order two also passes through the order $k$ cone point ($k=2m$ is necessarily even)  then  by Lemma \ref{lem: passing through both cone points}, it must happen periodically which means $w$ is necessarily the even power of $a gb^{m}g^{-1}$.    That is, an element of $\mathcal{B}$. 
 \end{proof}

     \begin{prop}[Based at order $k$] \label{prop: normal form based at order k}
Consider $G=\mathbb{Z}_2\ast\mathbb{Z}_k$ and suppose $w$ corresponds to  a reciprocal geodesic based at the order $k$  cone point.  Then either 
\begin{enumerate}
 \item $w$  is a power of a primitive element of the form $[b^{m}, g]$, $g$ an (aa)-word, and it
     has exactly two  representatives  in normal form. Namely, the two conjugates in normal form  are $[b^{m},h]^n$ and $[b^{m},h^{-1}]^n$,  where $[b^{m}, h]$ is primitive in $G$, $n$ is a unique positive integer, and $h$ is a unique (aa)-word not of order 2, or
     \item   $w$  is the even power of a word of the form $b^{m}gag^{-1}$, $g$ is an (ab)-word or the identity. That is, $w$ is a $\mathcal{B}$-type reciprocal word. 
   \end{enumerate}
  \end{prop}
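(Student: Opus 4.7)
The plan parallels the proof of Proposition \ref{prop: normal form based at order 2}, with the roles of the two involution classes represented by $a$ and $b^{m}$ interchanged. Since $w$ represents a reciprocal geodesic based at the order $k=2m$ cone point, it is the product of two distinct conjugates of $b^{m}$, and by Lemma \ref{lem: normal forms}(2) some conjugate of $w$ lies in the normal form set $\{[b^{m},h] : h \textrm{ is an } (aa)\textrm{-word}\}$. The argument then splits according to whether or not the axis of $w$ passes through the order 2 cone point.

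In Case 1, when the axis avoids the order 2 cone point, Lemma \ref{lem: conjugates}(5) produces a unique primitive element $v \in G$ and a unique positive integer $n$ with $w = v^{n}$. Since $v$ shares its axis with $w$, it is itself a primitive reciprocal hyperbolic element at the order $k$ cone point, and so admits a representative $v = [b^{m}, h]$ with $h$ an $(aa)$-word. To see that $h$ cannot be of order $2$, observe that $b^{m}=b^{-m}$ in $\mathbb{Z}_{2m}$, so $[b^{m},h] = b^{m} h b^{m} h^{-1}$; if $h = h^{-1}$ this collapses to $(b^{m} h)^{2}$, contradicting primitivity of $v$. The ``exactly two conjugates in normal form'' assertion, together with uniqueness of the pair $(n,h)$, then follows as in Proposition \ref{prop: normal form based at order 2} and in the argument of \cite{BasSuz, ErSo2, Sar}, yielding precisely the pair $[b^{m},h]^{n}$ and $[b^{m},h^{-1}]^{n}$ (corresponding to the two orientations of the reciprocal loop).

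In Case 2, when the axis passes through both cone points, Lemma \ref{lem: passing through both cone points} applies and $w$ can be written as $(ag b^{m} g^{-1})^{2q}$ for some $(ba)$-word or identity $g$. The equivalent presentation $(b^{m} g^{-1} a g)^{2q}$ supplied by the same lemma, combined with the observation that the inverse of a $(ba)$-word is an $(ab)$-word, gives precisely the form asserted in item (2) (after the relabelling $g \mapsto g^{-1}$). Hence $w$ is a $\mathcal{B}$-type reciprocal word.

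The step I expect to be the main obstacle is the ``exactly two'' count in Case 1. Although Lemma \ref{lem: conjugates}(2) identifies conjugates of a cyclically reduced word with its cyclic permutations, and the strategy of \cite{BasSuz, ErSo2, Sar} transfers in principle, the commutator $[b^{m}, h]$ introduces length-$m$ runs of $b$'s at both endpoints of each fundamental domain of $[b^{m}, h]^{n}$. Verifying that no cyclic shift other than the two listed yields another decomposition of the form $[b^{m}, h']^{n}$ with $h'$ an $(aa)$-word requires careful syllable-level bookkeeping on the positions at which $b^{m}$ appears in the cyclically reduced word, but it should ultimately reduce to the $(ab)$-word uniqueness statement recorded in Lemma \ref{lem: conjugates}(2).
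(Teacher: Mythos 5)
Your proposal is correct and follows essentially the same route as the paper, which explicitly omits this proof on the grounds that it is a recasting of Proposition \ref{prop: normal form based at order 2} with an identical argument: a case split on whether the axis meets the other cone point, an appeal to Lemma \ref{lem: passing through both cone points} for the $\mathcal{B}$-type case, and the citation of \cite{BasSuz, ErSo2, Sar} for the ``exactly two normal forms'' count. Your added check that $h$ cannot have order $2$ (via the collapse $[b^{m},h]=(b^{m}h)^{2}$) matches the remark the paper records just after Lemma \ref{lem: normal forms}.
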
 
  
  We omit the proof of  Proposition  \ref{prop: normal form based at order k}   as the statement is simply a recasting of  
  Proposition  \ref{prop: normal form based at order 2}
 with an identical proof. 
 
 \begin{rem}\label{rem: simultaneously in both}
 When $k=2m$ is even it is a consequence of  Propositions  \ref{prop: normal form based at order 2} and 
 \ref{prop: normal form based at order k}
that there exist 
 reciprocal geodesics that are simultaneously based at the order 2 cone point and the order $k$ cone point. For example,  the element
 $(ab^{m})^2 =ab^{m}ab^{m}$ is conjugate to 
 $(b^{m}a)^{2}=b^{m}ab^{m}a$. These are the $\mathcal{B}$-type elements. 
 \end{rem}

  %%%%%%%%%%%%%%%%%%%%%%%%%
  \section{Reciprocal Growth based at the order 2 cone point}
   \label{sec: rec. growth based at order 2}
  In this section, we consider reciprocal growth based at the order two cone point. There are two cases to consider depending one whether $k$ is odd or even. 

  \subsection{Reciprocal growth based at the order two cone point, $k$ odd}
  The results in this section appear in the Doctoral  dissertation   \cite{Mar}.
  When $k$ is odd there is only one type of reciprocal geodesic on the Hecke surface $X_{k}$. Namely, the reciprocal geodesics  based at the order 2 cone point. 
  Let $\mathcal{N}_{2t}$ be the set of normal forms for reciprocal words of length $2t$. 
  
  \begin{prop}\label{thm:recurrencerelationkodd}
  	For $\mathbb{Z}_{2} * \mathbb{Z}_{k}$ and $k=2m+1$, the number of reciprocal words based at the order 2  in normal form of length $2t$, $|\mathcal{N}_{2t}|$, satisfies the following:
  	
  	\begin{itemize}
  		\item [(i)]  for $2 \leq t \leq m + 1$, $|\mathcal{N}_{2t}| = \frac{1}{3}(2^{t}+2(-1)^{t})$
  		\item [(ii)]  for $t = m + 2$, $|\mathcal{N}_{2t}| = \frac{1}{3}(2^{t}+2(-1)^{t}) - 2$
  		\item [(iii)]  for $t > m + 2$, 
\begin{equation}\label{eq: recurrence relation}
|\mathcal{N}_{2t}| = 2|\mathcal{N}_{2(t-2)}| + 2|\mathcal{N}_{2(t-3)}| + \dots + 2|\mathcal{N}_{2(t-(m+1))}|
\end{equation}
  		
  	\end{itemize}

  \end{prop}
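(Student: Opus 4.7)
The plan is to parameterize $\mathcal{N}_{2t}$ explicitly, package the count in a rational generating function, and read off items (i)--(iii) by extracting coefficients of $z^t$ from a single functional identity.

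\textbf{Parameterization.} By Lemma~\ref{lem: normal forms}, elements of $\mathcal{N}_{2t}$ are in bijection with $(bb)$-words
\[h=b^{x_0}ab^{x_1}a\cdots ab^{x_{n-1}}, \qquad n\ge 1,\quad x_i\in E_k=\{\pm 1,\dots,\pm m\},\]
via $h\mapsto [a,h]$. Since $a^{-1}=a$, one has $[a,h]=ahah^{-1}$ and hence $|[a,h]|=2+2|h|=2n+2\sum_{i=0}^{n-1}|x_i|$, so $[a,h]\in\mathcal{N}_{2t}$ iff $t=n+\sum|x_i|$.

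\textbf{Generating function.} Give each block $b^{x_i}$ weight $z^{|x_i|+1}$ (so that $z$ tracks $t$) and a factor of $2$ for the sign of $x_i$. Since $|x_i|\in\{1,\dots,m\}$, the generating function for a single block is $g(z):=2z^2+2z^3+\cdots+2z^{m+1}$, and summing over $n\ge 1$ concatenated blocks gives
\[F(z):=\sum_{t\ge 0}|\mathcal{N}_{2t}|\, z^t=\sum_{n\ge 1}g(z)^n=\frac{g(z)}{1-g(z)},\]
which rearranges to the master identity
\[\bigl(1-2z^2-2z^3-\cdots-2z^{m+1}\bigr)\, F(z)\;=\;2z^2+2z^3+\cdots +2z^{m+1}.\]

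\textbf{Extracting (i)--(iii).} For $t\ge m+2$ the coefficient of $z^t$ on the right vanishes, yielding item (iii) directly. For $2\le t\le m+1$ the coefficient on the right is $2$, giving $|\mathcal{N}_{2t}|=2+2\sum_{j=1}^{t-2}|\mathcal{N}_{2j}|$; subtracting this relation at $t$ and $t-1$ collapses it to the Jacobsthal recurrence $|\mathcal{N}_{2t}|=|\mathcal{N}_{2(t-1)}|+2|\mathcal{N}_{2(t-2)}|$ for $3\le t\le m+1$, with initial values $|\mathcal{N}_4|=|\mathcal{N}_6|=2$ read off from the single-block cases $h=b^{\pm 1}$ and $h=b^{\pm 2}$. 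Its solution is $\frac{1}{3}(2^t+2(-1)^t)$, giving item (i). At the transitional value $t=m+2$ the right-hand side has already dropped to $0$, so $|\mathcal{N}_{2(m+2)}|=2\sum_{j=1}^{m}|\mathcal{N}_{2j}|$; plugging in (i) and evaluating the elementary sums $\sum_{j=2}^m 2^j=2^{m+1}-4$ and $\sum_{j=2}^m(-1)^j=\frac{1}{2}(1+(-1)^m)$ yields item (ii).

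\textbf{Main obstacle.} The enumeration and functional equation are mechanical; the only delicate step is the transition from (i) to (ii). The Jacobsthal closed form continues to satisfy the relation as long as the inhomogeneous term $2$ is present on the right of the master identity, and fails precisely at $t=m+2$ when that term vanishes. Quantifying the discrepancy between $|\mathcal{N}_{2(m+1)}|+2|\mathcal{N}_{2m}|$ and $2\sum_{j=1}^m|\mathcal{N}_{2j}|$ produces the $-2$ correction that distinguishes item (ii) from the naive extrapolation of (i).
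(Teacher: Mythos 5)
Your proof is correct, and its core is the same decomposition the paper uses: classifying the normal form $[a,h]$ by the first syllable $b^{x_0}$ of the $(bb)$-word $h$ (with the sign of $x_0$ contributing the factor $2$ and $|x_0|=m$ the largest shift) is exactly the paper's derivation of item (iii); your $F=g+gF$ identity is the generating-function transcription of that first-block recursion. Where you go further is in items (i) and (ii), which the paper leaves to the reader: the single master identity gives you the inhomogeneous term $[z^t]g(z)\in\{2,0\}$, so the Jacobsthal closed form for $t\le m+1$, and the $-2$ defect at the transitional value $t=m+2$, all fall out of one computation rather than separate ad hoc counts. One small point worth making explicit if you write this up: the bijection $h\mapsto[a,h]$ between $(bb)$-words with $n+\sum|x_i|=t$ and $\mathcal{N}_{2t}$ uses that $ahah^{-1}$ is already reduced of length $2+2|h|$ and that distinct $h$ give distinct reduced words; also the Jacobsthal step needs both initial values $t=2,3$ to lie in the range $t\le m+1$, i.e.\ $m\ge 2$, which is the standing hypothesis of Theorem A (for $m=1$ item (i) is the single directly checked value $t=2$).
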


Recall that our allowable exponents for $k=2m+1$ are,

\begin{displaymath}
E_k=\{l \in \mathbb{Z}-\{0\} : -m \leq l \leq m \}
\end{displaymath}

  \begin{proof} Proofs of $(i)$ and $(ii)$  we leave to the reader. 
  	
  	Proof of $(iii)$.  Assume $t > m+2$. To derive this  recurrence relation we first  consider normal forms as in 
	item (1) of Lemma \ref{lem: normal forms}  according to whether they start with 
$ab , ab^{2} , ab^{3}, ..., ab^{m}$. Hence the number of possible normal forms of length $2t$  that start with $ab$ are in one to one correspondence with the normal forms of length $2(t-2)$.
Similarly, the ones that start with $ab^{2}$ are in one to one correspondence with the normal forms of length $2(t-3)$, and so on
until the ones that start with $ab^{m}$ correspond to the normal forms of 
length $2(t-(m+1))$.  Finally, the normal forms that start with 
$ab^{-1} , ab^{-2} , ab^{-3}, ..., ab^{-m}$ have the same  count as the ones that start with the analogous positive integer. Hence,  we have derived the recurrence  relation
$$
|\mathcal{N}_{2t}| = 2|\mathcal{N}_{2(t-2)}| + 2|\mathcal{N}_{2(t-3)}| + \dots + 2|\mathcal{N}_{2(t-(m+1))}|
$$  	
  \end{proof}

  \begin{thm}\label{thm:binetz2zkoddforrecursion}
   The conjugacy classes of reciprocal words based at the order 2  in 
   $\mathbb{Z}_{2} * \mathbb{Z}_{k}$,  for  $k=2m+1$ has growth rate

\begin{displaymath}
\frac{\big|\{\gamma \text{ reciprocal geodesic based at order 2}: |\gamma|=2t\}\big|}
{\alpha_{2m+1}^{t}} \xrightarrow{t\to\infty} \frac{c_{2m+1}}{2}
\end{displaymath}

   where  $\alpha_{2m+1}$  is the unique positive real root of the polynomial, $x^{m+1} -2x^{m-1} -2x^{m-2} - \dots -2x^{2} -2x-2$, and $c_{2m+1}$ 	is the coefficient of $\alpha_{2m+1}$ in the solution of the linear system determined by the recurrence relation (\ref{eq: recurrence relation}).
   The growth rate for the primitive conjugacy classes  of reciprocal words 
   is the same. 
  \end{thm}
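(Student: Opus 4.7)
The plan is to combine the recurrence of Proposition \ref{thm:recurrencerelationkodd} with the standard theory of linear recurrences, and then use Proposition \ref{prop: normal form based at order 2} (whose case (2) is vacuous for odd $k$) to translate the count of normal forms into a count of conjugacy classes. First I would observe that the recurrence $|\mathcal{N}_{2t}| = 2|\mathcal{N}_{2(t-2)}| + 2|\mathcal{N}_{2(t-3)}| + \cdots + 2|\mathcal{N}_{2(t-(m+1))}|$ has characteristic polynomial exactly $p(x) = x^{m+1} - 2x^{m-1} - 2x^{m-2} - \cdots - 2x - 2$, so the general solution is $|\mathcal{N}_{2t}| = \sum_j c_j \lambda_j^{t}$, where the $\lambda_j$ are the roots of $p$ and the coefficients $c_j$ are uniquely determined by the $m+1$ initial values supplied by parts (i)–(ii) of the same proposition.

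The main step, and the anticipated main obstacle, is verifying that $p(x)$ has a strictly dominant root. Existence and uniqueness of a positive real root $\alpha_{2m+1}$ follow from Descartes' rule of signs (one sign change in $p$). For dominance, I would rewrite $p(\beta)=0$ as $\beta^{m+1} = 2(\beta^{m-1} + \beta^{m-2} + \cdots + \beta + 1)$ and apply the triangle inequality to obtain $|\beta|^{m+1} \le 2(|\beta|^{m-1} + \cdots + |\beta| + 1)$. Since the function $f(x) = x^{m+1} - 2(x^{m-1} + \cdots + 1)$ is strictly positive for $x > \alpha_{2m+1}$ (as $\alpha_{2m+1}$ is its unique positive zero), this forces $|\beta| \le \alpha_{2m+1}$. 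Equality in the triangle inequality requires $1, \beta, \ldots, \beta^{m-1}$ to share a common argument, which forces $\beta$ to be a positive real and hence $\beta = \alpha_{2m+1}$. A more careful treatment is deferred to Section \ref{sec: poly properties}.

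Given strict dominance, dividing the general solution by $\alpha_{2m+1}^t$ yields $|\mathcal{N}_{2t}|/\alpha_{2m+1}^t = c_{2m+1} + o(1)$. Positivity of $c_{2m+1}$ follows either from a Perron--Frobenius argument applied to the companion matrix of the recurrence (non-negative and aperiodic, since $\gcd(2,3,\ldots,m+1)=1$), or from the explicit computation of the linear system carried out in Section \ref{sec: linear system}. To pass from normal forms to conjugacy classes, I appeal to Proposition \ref{prop: normal form based at order 2}: since $k=2m+1$ is odd, $\mathcal{B}$-type words do not exist, so only case (1) applies, and each reciprocal conjugacy class has exactly two representatives in $\mathcal{N}$ (namely $[a,h]^n$ and $[a,h^{-1}]^n$). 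Hence the number of reciprocal conjugacy classes of length $2t$ equals $|\mathcal{N}_{2t}|/2$, giving the asserted asymptotic $(c_{2m+1}/2)\alpha_{2m+1}^{t}$.

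Finally, to see that the primitive count has the same leading asymptotic: by Proposition \ref{prop: normal form based at order 2}(1), any non-primitive reciprocal conjugacy class of length $2t$ is a proper power $[a,h]^n$ with $n\ge 2$ of a primitive reciprocal class of length $2t/n \le t$. Summing over divisors $n\ge 2$ of $t$, the number of non-primitive reciprocal conjugacy classes of length $2t$ is bounded by a constant multiple of the total number of reciprocal conjugacy classes of length at most $t$, which by the previous paragraph is $O(\alpha_{2m+1}^{t/2})$. Since $\alpha_{2m+1}^{t/2} = o(\alpha_{2m+1}^{t})$, the primitive reciprocal conjugacy classes share the same leading asymptotic $(c_{2m+1}/2)\alpha_{2m+1}^{t}$.
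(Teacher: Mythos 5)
Your proposal is correct and follows essentially the same route as the paper: recurrence $\to$ characteristic polynomial $\to$ dominant-root asymptotics for $|\mathcal{N}_{2t}|$ (your triangle-inequality dominance argument is exactly the paper's Lemma \ref{lem: dominant root}), then division by $2$ via Proposition \ref{prop: normal form based at order 2}(1) with no $\mathcal{B}$-type words for odd $k$, and finally the divisor-sum bound $O(t\,\alpha_{2m+1}^{t/2})$ on non-primitive classes. The only addition beyond the paper is your Perron--Frobenius remark on the positivity of $c_{2m+1}$, which the paper leaves to the explicit linear system in Section \ref{sec: linear system}.
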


  \begin{proof} Recall $m$ is a positive integer. 
  We begin with the recurrence relation
  	
  	\begin{equation*}
  	|\mathcal{N}_{2t}| = 2|\mathcal{N}_{2(t-2)}| + 2|\mathcal{N}_{2(t-3)}| + \dots + 2|\mathcal{N}_{2(t-(m+1))}|
  	\end{equation*}
  	
  	and rewrite it as 
  	
  	\begin{equation*}
  	|\mathcal{N}_{2t}| - 2|\mathcal{N}_{2(t-2)}| - 2|\mathcal{N}_{2(t-3)}| - \dots - 2|\mathcal{N}_{2(t-(m+1))}| = 0
  	\end{equation*}
  	Hence,  the characteristic polynomial is
	\begin{equation}
  	P_{2m+1}(x) = x^{m+1}-2\sum_{j=1}^{m} x^{m-j}     
	=x^{m+1} -2x^{m-1} - \dots -2x-2 
  	\end{equation}
	
The polynomial $P_{2m+1}$, by Lemma 
\ref{lem: dominant root}, has a unique positive root which is dominant. This root  is denoted  $\alpha_{k}$, ($k=2m+1$).
 	Using the fact that there are exactly two conjugate words in normal form (item (1) of  Proposition \ref{prop: normal form based at order 2} and the fact that there are no 
	$\mathcal{B}$-type words), we obtain the  result

\begin{equation}
\frac{\big|\{\gamma \text{ reciprocal geodesic based at order 2}: |\gamma|=2t\}\big|}
{\alpha_{2m+1}^{t}} \xrightarrow{t\to\infty} \frac{c_{2m+1}}{2}
\end{equation}

To finish the proof, we are left to show  the analogous result for the primitive reciprocal   words  in $\mathbb{Z}_{2} * \mathbb{Z}_{k}$,  for $k$ odd. Denote  the nonprimitive reciprocal words based at the order 2 cone point by $|R_{2t}^{np}|$.

 \vskip5pt

  \noindent{\bf  Claim:} 
  For  $\mathbb{Z}_{2} * \mathbb{Z}_{k}$ and $k$ odd, we have
  	$|R_{2t}^{np}| \leq \gamma_{1} t \alpha_{2m+1}^{t/2}$ 
	where   $\gamma_{1}$ is  a universal constant.
  \vskip5pt

\noindent{\bf Proof of Claim:} Denoting the roots of 
$P_{2m+1}$  by 
$\lambda_1$,..., $\lambda_{m+1}$ (they are distinct by Lemma \ref{lem: dominant root}), 
we first note that   $|\mathcal{N}_{2s}|$ can be expressed as the  linear combination 
\begin{equation}\label{eq: general solution}
|\mathcal{N}_{2s}|
=c_{1}\lambda_{1}^{s} +...+c_{m+1}\lambda_{m+1}^{s}
\end{equation}

Now since $\alpha_{2m+1}$ is the dominant root, we obtain the  the upper bound
 \begin{equation}\label{eq: upper bound}
 \frac{|\mathcal{N}_{2s}| }{2}\leq c_{k} \alpha_{2m+1}^{s}
 \end{equation}.
 
Fix $t \in \mathbb{Z}_{+}$. Note that  the  nonprimitive reciprocal words  of length   $2t$ are in one to one correspondence  with the primitive reciprocal word of length $2s$, where $s$ is a proper divisor of  $t$. 
See \cite{BasSuz}  for  details. Summing over proper divisors $s$ of $t$ we have
  	
  	\begin{equation*} \label{nonprims computation1 Z2Zk}
  	|R_{2t}^{np}| = 
\sum |\text{$ \gamma$ a prim. rec. geodesic based at order 2: $|\gamma|=2s$}| 
\end{equation*}
\begin{equation*}
\leq \sum |\text{$ \gamma$ a  rec. geodesic based at order 2: $|\gamma|=2s$}|  
\leq c  \alpha_{2m+1}^{t/2} \sum 1 \leq \frac{ct \alpha_{2m+1}^{t/2}}{2} 
  \end{equation*}
where the second from right inequality follows from  inequality 
(\ref{eq: upper bound}) and the fact that the largest divisor is $t/2$, and the last inequality follows from the fact that $t$ has at most 
$\frac{t}{2}$ divisors. Setting  $\gamma_{1}=\frac{c}{2}$  yields  the desired inequality and finishes the proof of Theorem \ref{thm:binetz2zkoddforrecursion}.
\end{proof}

  %%%%%%%%%%%%%%%%%%%%%%%%%

 \subsection{Reciprocal growth based at the order 2 cone point, $k$ even}
 Our goal in this subsection is to compute the growth rate of the  conjugacy classes of reciprocal words  based at the order two cone point in $\mathbb{Z}_{2} * \mathbb{Z}_{k}$ for 
 $k=2m$ even. To this end,
 we first compute the growth  rate of all normal forms including $\mathcal{B}$-type words, and then  we  show   that the normal forms of $\mathcal{B}$-type words are negligible  as the  word  length goes to infinity.

\begin{prop}\label{thm:recurrence relation even k}
	For $G=\mathbb{Z}_{2} * \mathbb{Z}_{k}$ and  with 
	$k=2m$ even, the number of reciprocal words based at the order 2  in normal form of length $2t$, 
	$|\mathcal{N}_{2t}|$, satisfies the following:
	
	\begin{itemize}
		\item [(i)]  for $2 \leq t < m + 1$, $|\mathcal{N}_{2t}| = \frac{1}{3}(2^{t}+2(-1)^{t})$
		\item [(ii)]  for $t = m + 1$, $|\mathcal{N}_{2t}| = \frac{1}{3}(2^{t}+2(-1)^{t}) - 1$
		\item [(iii)]  for $t = m + 2$, $|\mathcal{N}_{2t}| = \frac{1}{3}(2^{t}+2(-1)^{t}) - 2$
		\item [(iv)]  for $t > m + 2$, 

\begin{equation}\label{eq: normal form recursion}
\mathcal{N}_{2t} = 2|\mathcal{N}_{2(t-2)}|  + 2|\mathcal{N}_{2(t-3)}|  \dots + 2|\mathcal{N}_{2(t-m)}| + |\mathcal{N}_{2(t-(m+1))}|
\end{equation}
		
	\end{itemize}

\end{prop}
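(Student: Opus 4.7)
The plan is to translate the problem into a purely combinatorial count of $(bb)$-words. By Lemma~\ref{lem: normal forms}, every normal form of a reciprocal word based at the order $2$ cone point has the shape $[a,h] = a h a h^{-1}$ with $h$ a $(bb)$-word; checking that no cancellation occurs at either of the two junctions $a \cdot h$ and $a \cdot h^{-1}$ (using that $h$ starts and ends in $b^{\pm}$) gives $|[a,h]| = 2|h| + 2$. Hence $|\mathcal{N}_{2t}|$ equals the number of $(bb)$-words $h$ of length $t-1$ whose exponents all lie in $E_{2m} = \{-(m-1),\ldots,-1,1,2,\ldots,m\}$, and from here the problem is pure word-counting.

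For part (iv), with $t > m+2$, I would stratify such $h$ by its first $b$-block $b^{x_0}$. The single-block case $h = b^{x_0}$ is impossible, since it would force $|x_0| = t-1 \geq m+2 > m$. Otherwise $h = b^{x_0} a h'$ with $h'$ itself a $(bb)$-word of length $|h'| = (t-1) - |x_0| - 1$, so that $h'$ is in bijection with an element of $\mathcal{N}_{2(t - |x_0| - 1)}$. For each $j \in \{1,\ldots,m-1\}$ both signs $x_0 = \pm j$ lie in $E_{2m}$, jointly contributing $2\,|\mathcal{N}_{2(t-j-1)}|$; for $j = m$ only $x_0 = +m$ is allowed (since $-m \notin E_{2m}$), contributing the lone term $|\mathcal{N}_{2(t-m-1)}|$. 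Summing over $j$ yields exactly the recursion (\ref{eq: normal form recursion}).

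For (i)--(iii) I would use a generating function. Let $N_0(s)$ count $(bb)$-words of length $s$ with \emph{no} restriction on the exponents (i.e.\ in $\mathbb{Z}_2 \ast \mathbb{Z}$). Decomposing by the number of $b$-blocks and summing the geometric series gives
\[
\sum_{s \geq 1} N_0(s)\, x^s \;=\; \frac{2x/(1-x)}{1 - 2x^2/(1-x)} \;=\; \frac{2x}{(1-2x)(1+x)},
\]
and partial fractions deliver $N_0(s) = \tfrac{1}{3}\bigl(2^{s+1}+2(-1)^{s+1}\bigr)$. Specialising to $s = t-1$ produces the closed form $\tfrac{1}{3}(2^t + 2(-1)^t)$ appearing in (i)--(iii). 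The corrections come from removing $(bb)$-words whose exponents escape $E_{2m}$: for $2 \leq t < m+1$ no exponent in a word of length $t-1$ can exceed $t-1 \leq m-1$, so the count is unchanged; at $t = m+1$ the only excluded word is the single block $h = b^{-m}$ (correction $-1$); at $t = m+2$ the only excluded words are the two single blocks $h = b^{\pm(m+1)}$ (correction $-2$), since any word with two or more $b$-blocks still has each $|x_i| \leq m-1$ in this length range.

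The main obstacle I anticipate is tracking the precise threshold between ``exponent restriction vacuous'' and ``exponent restriction biting.'' The homogeneous recursion (iv) only takes effect at $t > m + 2$ because one needs simultaneously that the single-block words $b^{\pm(t-1)}$ have disappeared from the count and that every subword on the right-hand side already has its asymmetric treatment of $+m$ versus $-m$ baked into the previously determined $|\mathcal{N}_{2s}|$. This asymmetry, absent in Proposition~\ref{thm:recurrencerelationkodd} for odd $k$, is exactly what produces the trailing coefficient $1$ rather than $2$, and is the source of the new polynomial family whose dominant roots are the $\beta_{2m}$ appearing in Theorem~A(2).
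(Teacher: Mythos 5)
Your proof is correct and follows essentially the same route as the paper: for item (iv) you stratify the normal forms by the leading $b$-block and observe that $b^{m}=b^{-m}$ prevents the final term from being doubled, which is exactly the paper's argument. Your generating-function derivation of the closed form and the exponent-overflow corrections for (i)--(iii) accurately supplies details the paper leaves to the reader.
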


Recall that for counting purposes the  allowable exponents for $k=2m$ are,

\begin{displaymath}
E_k=\{l \in \mathbb{Z}-\{0\} : -m+1 \leq l  \leq m \}
\end{displaymath}

\begin{proof} Recall the normal form in Lemma \ref{lem: normal forms}, item (1).The proofs of $(i)$,$(ii)$ and $(iii)$ are straightforward and left  to  the reader.  The proof of item $(iv)$ is essentially the same as for  the odd case. The only difference is that since
$ab^{m}=ab^{-m}$,  hence the last term in the recurrence relation is not doubled since $b^{m}=b^{-m}$.  \end{proof}

Our goal  is to prove

\begin{thm}\label{thm:binetz2zkevenforrecursion}
The conjugacy classes of reciprocal words in 
   $G=\mathbb{Z}_{2} * \mathbb{Z}_{k}$,  for  $k=2m$ has growth rate
  \begin{equation}
\frac{|\{\gamma \text{ reciprocal geodesic based at order } 2:
|\gamma|=2t\}|}
{\beta_{2m}^{t}} \xrightarrow{t\to\infty} \frac{d_{2m}}{2}
\end{equation}
where  $\beta_{2m}$ is  the unique positive real root of the polynomial, $x^{m+1} -2x^{m-1} -2x^{m-2} - \dots -2x^{2} -2x-1$ and $d_{2m}$ is the coefficient of $\beta_{2m}$ in the 
   linear recurrence relation (\ref{eq: normal form recursion}).
   The growth rate for the primitive conjugacy class of reciprocal words 
   is the same. 
  \end{thm}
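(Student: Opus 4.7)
The plan is to mirror the structure of the proof of Theorem \ref{thm:binetz2zkoddforrecursion}, with two adjustments reflecting the even parity of $k$: the recurrence from Proposition \ref{thm:recurrence relation even k}(iv) ends with a single (rather than doubled) $|\mathcal{N}_{2(t-(m+1))}|$ term, and the conversion from normal forms to conjugacy classes of reciprocal words must contend with the $\mathcal{B}$-type elements isolated in Proposition \ref{prop: normal form based at order 2}.

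First, I would translate the recurrence (\ref{eq: normal form recursion}) into its characteristic polynomial
\[
P_{2m}(x) \;=\; x^{m+1} - 2x^{m-1} - 2x^{m-2} - \cdots - 2x - 1,
\]
and invoke the dominant-root lemma (\ref{lem: dominant root}, to be established in Section \ref{sec: poly properties}) to conclude that $P_{2m}$ has a unique positive root $\beta_{2m}$ which dominates all other roots in modulus and is simple. The Binet-type solution to (\ref{eq: normal form recursion}) therefore takes the form $|\mathcal{N}_{2t}| = d_{2m}\,\beta_{2m}^{t} + o(\beta_{2m}^{t})$, where the constant $d_{2m}$ is pinned down by the initial values supplied by items (i)--(iii) of Proposition \ref{thm:recurrence relation even k} via the linear system in Section \ref{sec: linear system}, which also certifies $d_{2m} > 0$.

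Second, I would pass from the count of normal forms to the count of conjugacy classes of reciprocal hyperbolic elements based at the order 2 cone point. By Proposition \ref{prop: normal form based at order 2}, any such conjugacy class either (a) has exactly two representatives $[a,h]^{n}$ and $[a,h^{-1}]^{n}$ in $\mathcal{N}$, or (b) arises from a $\mathcal{B}$-type word. Separating the two contributions gives
\[
\bigl|\{\gamma : |\gamma|=2t\}\bigr| \;=\; \tfrac{1}{2}\bigl(|\mathcal{N}_{2t}| - |\mathcal{N}_{2t}\cap \mathcal{B}|\bigr) \;+\; \bigl|\{\mathcal{B}\text{-type conjugacy classes of length }2t\}\bigr|.
\]
Since $\beta_{2m} > \sqrt{2}$ (to be proved in Section \ref{sec: poly properties}), Lemma \ref{lem:Reciprocal thru both cone points} gives $|\mathcal{B}_{2t}|/\beta_{2m}^{t} \to 0$, so both the subtracted term and the $\mathcal{B}$-type count are negligible. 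Dividing through by $\beta_{2m}^{t}$ and letting $t \to \infty$ yields the claimed limit $d_{2m}/2$.

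Third, I would address the passage to primitive conjugacy classes by the exact divisor-sum argument used in the proof of Theorem \ref{thm:binetz2zkoddforrecursion}. Every non-primitive reciprocal conjugacy class of length $2t$ is a proper power of a primitive one of length $2s$ with $s\mid t$, $s\leq t/2$; using the Binet upper bound $|\mathcal{N}_{2s}|/2 \leq C\,\beta_{2m}^{s}$ and the fact that $t$ has at most $t/2$ divisors produces the bound $C' t\, \beta_{2m}^{t/2} = o(\beta_{2m}^{t})$, so primitives and all reciprocal words share the same asymptotic rate and constant. The principal obstacle is the bookkeeping in step two: one must be certain that $\mathcal{B}$-type normal forms are controlled both as a subset of $\mathcal{N}_{2t}$ and as their own conjugacy classes, and that $d_{2m}>0$ genuinely emerges from the initial data. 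The first point is handled cleanly by Lemma \ref{lem:Reciprocal thru both cone points} together with $\beta_{2m}>\sqrt{2}$; the second is deferred to the explicit solution of the linear system in Section \ref{sec: linear system}.
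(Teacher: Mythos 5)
Your proposal is correct and follows essentially the same route as the paper: the characteristic polynomial of the recurrence (\ref{eq: normal form recursion}) with the dominant-root lemma gives $|\mathcal{N}_{2t}|\sim d_{2m}\beta_{2m}^{t}$, the two-to-one correspondence from Proposition \ref{prop: normal form based at order 2} combined with the negligibility of $\mathcal{B}$-type words (Lemma \ref{lem:Reciprocal thru both cone points}, using $\beta_{2m}>\sqrt{2}$) yields the factor $\tfrac12$, and the divisor-sum bound handles primitives. The only cosmetic difference is that you write the conjugacy-class count as an exact decomposition where the paper sandwiches $|R_{2t}|$ between $\tfrac12|\mathcal{N}_{2t}-\mathcal{B}_{2t}|$ and $\tfrac12|\mathcal{N}_{2t}-\mathcal{B}_{2t}|+|\mathcal{B}_{2t}|$; both lead to the same limit.
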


We start with the following

\begin{lem}\label{lem: recurrence asymptotic}
$|\mathcal{N}_{2t}| \sim d_{2m} \beta_{2m} ^{t}$, as $t \to \infty$. Where $d_{2m}$ is the corresponding   coefficient for $\beta_{2m}$.
\end{lem}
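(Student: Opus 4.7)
The plan is to treat this as a standard application of the theory of linear recurrences with constant coefficients. The recurrence (\ref{eq: normal form recursion}) has characteristic polynomial
\[
P_{2m}(x) = x^{m+1} - 2x^{m-1} - 2x^{m-2} - \cdots - 2x - 1,
\]
and by the properties of $P_{2m}$ (to be established in Section \ref{sec: poly properties}), the root $\beta_{2m}$ is a \emph{simple} real root that strictly dominates every other root in modulus. Consequently, the general solution of the recurrence can be written as
\[
|\mathcal{N}_{2t}| \;=\; d_{2m}\,\beta_{2m}^{\,t} \;+\; \sum_{j\geq 2} Q_j(t)\,\lambda_j^{\,t},
\]
where the $\lambda_j$ are the remaining roots of $P_{2m}$ and each $Q_j$ is a polynomial of degree at most $(\text{multiplicity of }\lambda_j)-1$; the constants $d_{2m}$ and the coefficients of $Q_j$ are determined uniquely by the initial values of Proposition \ref{thm:recurrence relation even k}.

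Dividing through by $\beta_{2m}^{\,t}$ and using $|\lambda_j|/\beta_{2m} < 1$ for every $j\geq 2$, each error term $Q_j(t)(\lambda_j/\beta_{2m})^t$ decays to $0$, giving
\[
\frac{|\mathcal{N}_{2t}|}{\beta_{2m}^{\,t}} \;\longrightarrow\; d_{2m} \qquad (t\to\infty).
\]
Thus the \emph{existence} of the limit and its identification with $d_{2m}$ is essentially automatic once the dominance of $\beta_{2m}$ is in hand.

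The main obstacle is to verify that $d_{2m}>0$, i.e.\ that the dominant-root component is actually present. For this, I would argue as follows: the companion matrix $C$ of the recurrence is a non-negative integer matrix whose characteristic polynomial is $P_{2m}$, and the recurrence coefficients $(2,2,\dots,2,1)$ together with their positions make $C$ irreducible and aperiodic (it contains a positive cycle of length $1$ coming from the constant term and a cycle of length $2$ from $2x^{m-1}$), so Perron--Frobenius applies: $\beta_{2m}$ is the simple Perron eigenvalue, with strictly positive left and right eigenvectors. Since the initial vector $(|\mathcal{N}_2|,\dots,|\mathcal{N}_{2(m+1)}|)$ is strictly positive (by items (i)--(iii) of Proposition \ref{thm:recurrence relation even k}), its inner product with the positive left eigenvector, which equals $d_{2m}$ up to a positive normalization, must be strictly positive.

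I expect the only slightly delicate point to be this positivity of $d_{2m}$; the rest is pure bookkeeping with a dominant-root expansion. I would defer the explicit formula for $d_{2m}$ (and the verification that the full linear system has a unique solution) to Section \ref{sec: linear system}, where the coefficients are computed directly from the initial conditions.
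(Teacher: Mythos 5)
Your core argument is the same as the paper's: the recurrence (\ref{eq: normal form recursion}) has characteristic polynomial $Q_{2m}$, Lemma \ref{lem: dominant root} gives a simple, strictly dominant positive root $\beta_{2m}$, and dividing the general solution by $\beta_{2m}^{t}$ kills every other term; the paper's proof is exactly this and nothing more. Where you genuinely add something is the positivity of $d_{2m}$, which the paper does not address in this lemma (it is only asserted in a remark and deferred to the numerics of Section \ref{sec: linear system}), yet is needed for the asymptotic $|\mathcal{N}_{2t}|\sim d_{2m}\beta_{2m}^{t}$ to be meaningful. Your Perron--Frobenius argument is the right tool, but one detail is off: the constant term $-1$ of $Q_{2m}$ corresponds to the $(m+1)$-steps-back term of the recurrence, hence to a cycle of length $m+1$ in the companion digraph, not a cycle of length $1$ (the one-step-back coefficient is $0$ here, since there is no $x^{m}$ term). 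Aperiodicity still holds because the coefficients of $x^{m-1}$ and $x^{m-2}$ are both nonzero for $m\geq 2$, giving cycles of lengths $2$ and $3$ with $\gcd$ equal to $1$; combined with irreducibility (the last coefficient is nonzero) and the strict positivity of the initial vector from items (i)--(iii) of Proposition \ref{thm:recurrence relation even k}, your conclusion $d_{2m}>0$ goes through. So your write-up is correct modulo that one cycle-length slip, and is in fact more complete than the paper's own proof on the positivity point.
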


\begin{proof} Assume $t >m+2$, and hence by item (iv) of 
  Proposition \ref{thm:recurrence relation even k}
the relevant recurrence relation is 
\begin{equation*}
	|\mathcal{N}_{2t}| - 2|\mathcal{N}_{2(t-2)}| - 2|\mathcal{N}_{2(t-3)}| - \dots - 2|\mathcal{N}_{2(t-m)}| - |\mathcal{N}_{2(t-(m+1))}| = 0
	\end{equation*}
	and associated characteristic polynomial is
	\begin{equation}
	x^{m+1} -2x^{m-1} - \dots -2x-1.
	\end{equation}
By Lemma \ref{lem: dominant root} there is a unique
	   positive real root which is dominant  for this polynomial which we denote by   $\beta_{2m}$.  Thus as $t \to \infty$, 
	   $|\mathcal{N}_{2t}| \sim d_{2m} \beta_{2m} ^{t}$ where $d_{2m}$ is the    corresponding coefficient of $\beta_{2m}$ in the general solution  of the recurrence relation (\ref{eq: normal form recursion}).
  \end{proof}	

Recall that in the $k=2m$ even case,  there are two types of  normal form elements for the reciprocal words based at the order 2 cone point.  Denote the normal forms of length $2t$ by $\mathcal{N}_{2t}$.
In order to produce the number of conjugacy classes  of reciprocal geodesics based at the order 2 (for $k$ even) we need to divide  $|\mathcal{N}_{2t}|$  by  the number in normalized form that are conjugate.   As we saw in 
Proposition  \ref{prop: normal form based at order 2} there are two types of words in $\mathcal{N}$.  The ones that are the power of 
an element of the form $[a,h]$, $h$ a (bb) word, and the words in    
$\mathcal{B}_{2t}$  which are    reciprocal words that are an even power of a word of the form $agb^{m}g^{-1}$,
where $g$ is a $(ba)$ word or the identity.

\begin{lem}[$\mathcal{B}$-type words grow slowly]
\label{lem:btypesslow}
If $k$ is even and $p \in X_{k}$ is a cone point, then the set of 
$\mathcal{B}$-type words are negligible (as $t \rightarrow \infty$) in the set $\mathcal{N}$. That is,

\[\displaystyle\lim_{t\to\infty}\dfrac{|\mathcal{B}_{2t}|}{|\mathcal{N}_{2t}|}=0.\]

\begin{equation}
\label{eqn:atypesdominate}
|\mathcal{N}_{2t}|\sim |\mathcal{N}_{2t}-\mathcal{B}_{2t}|, \textrm{as}\,\,t\to\infty.\end{equation}

\end{lem}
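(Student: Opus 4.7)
The plan is to combine two asymptotic estimates that are essentially already in hand. From Lemma \ref{lem: recurrence asymptotic} we know $|\mathcal{N}_{2t}| \sim d_{2m}\,\beta_{2m}^{t}$, and from Lemma \ref{lem:Reciprocal thru both cone points} we know $|\mathcal{B}_{2t}|/\alpha^{t} \to 0$ for every $\alpha > \sqrt{2}$. The conclusion is immediate once we may take $\alpha = \beta_{2m}$ in the latter; that is, once we verify that $\beta_{2m} > \sqrt{2}$ for every $m \geq 2$.

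The first step is to invoke the elementary inequality $\beta_{2m} > \sqrt{2}$, which is part of the remark following Corollary C and is treated in Section \ref{sec: poly properties}. Concretely, evaluating the characteristic polynomial $P(x) = x^{m+1} - 2x^{m-1} - 2x^{m-2} - \cdots - 2x - 1$ at $x = \sqrt{2}$ yields a negative value, so by the intermediate value theorem together with Lemma \ref{lem: dominant root} (which gives a unique positive real root dominating all others), that root must exceed $\sqrt{2}$. This is the only step where anything new is required; everything else is a combination of prior lemmas.

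With this in hand, apply Lemma \ref{lem:Reciprocal thru both cone points} with $\alpha = \beta_{2m}$. Factoring
$$\frac{|\mathcal{B}_{2t}|}{|\mathcal{N}_{2t}|} \;=\; \frac{|\mathcal{B}_{2t}|}{\beta_{2m}^{t}} \,\cdot\, \frac{\beta_{2m}^{t}}{|\mathcal{N}_{2t}|},$$
the first factor tends to $0$ by Lemma \ref{lem:Reciprocal thru both cone points}, and the second factor tends to $1/d_{2m}$ by Lemma \ref{lem: recurrence asymptotic}. Hence the ratio tends to $0$, proving the first displayed limit.

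For the asymptotic equivalence $|\mathcal{N}_{2t}| \sim |\mathcal{N}_{2t} - \mathcal{B}_{2t}|$, use the inclusion $\mathcal{B}_{2t} \subseteq \mathcal{N}_{2t}$, which holds because every $\mathcal{B}$-type reciprocal word admits a representative in the normal form $[a,h]$ of Lemma \ref{lem: normal forms}(1), as noted in Proposition \ref{prop: normal form based at order 2}(2). Then
$$\frac{|\mathcal{N}_{2t} - \mathcal{B}_{2t}|}{|\mathcal{N}_{2t}|} \;=\; 1 - \frac{|\mathcal{B}_{2t}|}{|\mathcal{N}_{2t}|} \;\longrightarrow\; 1,$$
which is the second claim. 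The same argument applies at the order $k$ cone point once the analogous recurrence asymptotic of Section \ref{sec: rec. growth based at order k} is in place. The only potential obstacle is the lower bound $\beta_{2m} > \sqrt{2}$; without it, Lemma \ref{lem:Reciprocal thru both cone points} provides no useful control on the $\mathcal{B}$-type count relative to the full normal-form count, and the comparison collapses.
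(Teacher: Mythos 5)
Your proposal is correct and follows essentially the same route as the paper: combine $|\mathcal{N}_{2t}|\sim d_{2m}\beta_{2m}^{t}$ from Lemma \ref{lem: recurrence asymptotic} with Lemma \ref{lem:Reciprocal thru both cone points} applied at $\alpha=\beta_{2m}$, using $\beta_{2m}>\sqrt{2}$ (established in Section \ref{sec: poly properties} exactly as you describe, via $Q_{2m}(\sqrt{2})<0$), and then deduce the asymptotic equivalence from the inclusion $\mathcal{B}_{2t}\subseteq\mathcal{N}_{2t}$. Your write-up merely makes explicit the factoring of the ratio and the complement computation that the paper leaves implicit.
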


\begin{proof}
We know  $|\mathcal{N}_{2t}|\sim d_{2m}\beta_{2m}^t$, as $t\to\infty$, where $\beta_{2m} > \sqrt{2}$ and hence it follows from Lemma \ref{lem:Reciprocal thru both cone points}
that 

\[\displaystyle\lim_{t\to\infty}\dfrac{|\mathcal{B}_{2t}|}{|\mathcal{N}_{2t}|}=0,\]

and thus
\begin{equation}
|\mathcal{N}_{2t}|\sim |\mathcal{N}_{2t}-\mathcal{B}_{2t}|, \textrm{as}\,\,t\to\infty.\end{equation}
\end{proof}

\begin{proof}[Proof of Theorem \ref{thm:binetz2zkevenforrecursion}]
For notational convenience
set 
$$
R_{2t}
=\{\gamma \text{ a reciprocal geodesic based at order } 2: |\gamma|=2t \}.
$$

From Proposition \ref{prop: normal form based at order 2},  item (1), we know that there are two   normal forms that are conjugate  in 
$\mathcal{N}_{2t}-\mathcal{B}_{2t}$ and hence represent the same element in $R_{2t}$.
Thus
\begin{equation}\label{eqn:conjclass}
\frac{1}{2}|\mathcal{N}_{2t}-\mathcal{B}_{2t}| \leq |R_{2t}| \leq \frac{1}{2}|\mathcal{N}_{2t}-\mathcal{B}_{2t}|+|B_{2t}|.\end{equation}

Dividing  by $|\mathcal{N}_{2t}|$:

\[\frac{1}{2}\frac{|\mathcal{N}_{2t}-\mathcal{B}_{2t}|}{|\mathcal{N}_{2t}|}\leq
\frac{|R_{2t}|}{|\mathcal{N}_{2t}|}\leq \frac{1}{2}\frac{|\mathcal{N}_{2t}-\mathcal{B}_{2t}|}{|\mathcal{N}_{2t}|}+\frac{|\mathcal{B}_{2t}|}{|\mathcal{N}_{2t}|}.\]

Letting  $t\to\infty$ and using Lemma  \ref{lem:btypesslow} and the asymptotic (\ref{eqn:atypesdominate}), we have \[\displaystyle\lim_{t\to\infty}\frac{|R_{2t}|}{|\mathcal{N}_{2t}|}=\frac{1}{2}.\]
Using  Lemma
\ref{lem: recurrence asymptotic} we obtain

 \begin{equation}
\frac{|R_{2t}|}{\beta_{2m}^{t}}=\frac{|R_{2t}|}{|\mathcal{N}_{2t}|}
\frac{|\mathcal{N}_{2t}|}{\beta_{2m}^{t}}
\xrightarrow{t\to\infty} \frac{d_{2m}}{2}.
\end{equation}

Finally the same asymptotic  holds for the primitive reciprocal geodesics based at the order 2 cone point since the 
words in $\mathcal{B}$-type ones are negligible and the non-primitve  elements in $\mathcal{N}_{2t}-\mathcal{B}_{2t}$ are powers of primitive elements in $\mathcal{N}_{2t}-\mathcal{B}_{2t}$ as in the $k$ odd case.
\end{proof}

%%%%%%%%%%%%%Based at order k%%%%%%%
\section{Reciprocal Growth  based at the order $k$ 
cone point  in $X_k$ for $k$ even.}
\label{sec: rec. growth based at order k}

We next take up the case of reciprocal geodesics based at the order $k$ cone point of $X_{k}$. Note that  $k$ is necessarily even; Set $k=2m$.  
   Such a reciprocal word is the product of two involutions each conjugate to $b^{m}$. Recall from Lemma \ref{lem: normal forms}, item (2), 
after conjugation we can put this word in 
   the reduced normal form $[b^{m},h]$, where $h$ is an (aa) word. In this section, the set of all such reduced normal forms is denoted $\mathcal{N}$. Now if $2t=|[b^{m},h]|$, then $t=m+|h|$ and 
   
\begin{equation}\label{eq: normal form for reciprocal based at order k}
h=ab^{x_0}...ab^{x_{n-1}}a
\end{equation}
where $x_0,...x_{n-1}$ are in the subset of integers 
\begin{displaymath}
E_k=\{l  \in \mathbb{Z}-\{0\} : -m+1 \leq l \leq m \}.
\end{displaymath}
 Thus the  normal form $[b^{m},h]$  is determined by the $(aa)$-word, $h$, and note that $|h|=0,3,4,...$.
Let $\mathcal{N}_{2t}$ be the number of reciprocal words in normal form, and note that $t \geq m+1$. Counting the elements of $\mathcal{N}_{2t}$ is equivalent to counting the number of $(aa)$ words $h=ab^{x_0}...ab^{x_{n-1}}a$ where 
$|h|=t-m$.  That is,
$$
\mathcal{N}_{2t} \overset{1-1}{\longleftrightarrow}
 \{h: h \text{ an (aa) word}, |h|=t-m\}
$$

\begin{prop}\label{thm:recurrence relation even k based at order k}
	For $G=\mathbb{Z}_{2} * \mathbb{Z}_{k}$ and $k=2m$ with $k$ even, the number of reciprocal words  based at the order $k$ in normal form of length $2t$, 
	$|\mathcal{N}_{2t}|$, satisfies the following:
	\begin{itemize}
		\item [(i)]  for $t = m + 1$, $|\mathcal{N}_{2t}| =1$
		\item [(ii)]  for $t = m + 2$, $|\mathcal{N}_{2t}| = 0$
		\item [(iii)]  for $t = m + 3$, $|\mathcal{N}_{2t}| = 2$
		\item [(iv)]  for $t > m + 3$, 
	\begin{equation}\label{eq: recurrence for rec. word based at k}
	|\mathcal{N}_{2t}| = 2|\mathcal{N}_{2(t-2)}|  + 2|\mathcal{N}_{2(t-3)}|  \dots + 
2|\mathcal{N}_{2(t-m)}| + |\mathcal{N}_{2(t-(m+1))}|
\end{equation}
		
	\end{itemize}
\end{prop}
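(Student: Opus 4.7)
The plan is to prove the base cases (i)--(iii) by direct enumeration, and to establish the recurrence (iv) by setting up a bijection that peels off the initial $a$-block of the $(aa)$-word encoding each normal form.

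As observed in the discussion preceding the statement, $|\mathcal{N}_{2t}|$ equals the number of $(aa)$-words $h = ab^{x_0}\cdots ab^{x_{n-1}}a$ (with $n \geq 0$ and $x_i \in E_k = \{-m+1,\ldots,-1,1,\ldots,m\}$) of length $|h| = t - m$. For the base cases: $t = m+1$ forces $|h|=1$, so $h=a$ is the unique option; $t = m+2$ forces $|h|=2$, and every $(aa)$-word with at least one internal $b$-block has length $\geq 3$, leaving no options; $t = m+3$ forces $|h|=3$, and the only possibilities are $h = aba$ and $h = ab^{-1}a$. These give items (i), (ii), and (iii).

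For the recurrence in (iv), fix $t > m+3$, so $|h| = t-m \geq 4$, forcing $n \geq 1$. I would split $h = ab^{x_0}\cdot h'$, where $h' = ab^{x_1}\cdots ab^{x_{n-1}}a$ is again an $(aa)$-word (of length $1$ if $n=1$) satisfying $|h'| = |h| - 1 - |x_0|$. For each $j = 1,\ldots,m$, the $h$'s with $|x_0|=j$ are in bijection with ordered pairs consisting of an admissible value of $x_0$ with $|x_0|=j$ together with an element $h' \in \mathcal{N}_{2(t-1-j)}$. For $j = 1,\ldots,m-1$, both $j$ and $-j$ lie in $E_k$, contributing a factor of $2$; for $j = m$, only $x_0 = m$ lies in $E_k$ (since $-m \notin E_k$, reflecting the identification $b^m = b^{-m}$ in $\mathbb{Z}_{2m}$), contributing a factor of $1$. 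Summing over $j$ yields the claimed recurrence.

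This argument is essentially the same as the one used to establish the recurrences in Propositions \ref{thm:recurrencerelationkodd} and \ref{thm:recurrence relation even k}; the only difference is the length shift $|h| = t-m$ in place of $|h| = t-1$, which does not alter the structure of the recurrence. The one step requiring care is the asymmetry at $|x_0| = m$, which produces the single (rather than double) coefficient on the final term and is precisely the feature that distinguishes the $k = 2m$ even case from the odd case; otherwise the derivation is a routine bijective count.
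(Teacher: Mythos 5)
Your proof is correct and follows essentially the same route as the paper: the paper also derives (iv) by classifying the $(aa)$-words $h$ according to their initial block $ab^{x_0}$, pairing $x_0=j$ with $x_0=-j$ for $j=1,\dots,m-1$ and noting that the last term is not doubled because $-m\notin E_k$ (i.e.\ $b^m=b^{-m}$), while leaving (i)--(iii) to the reader. Your explicit treatment of the base cases and of the length bookkeeping $|h'|=|h|-1-|x_0|$ just fills in details the paper omits.
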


\begin{proof} The proofs of $(i)-(iii)$ are left to the reader to check. 
For item $(iv)$,  $t>m+3$, the proof follows  in the same way as in  Propositions \ref{thm:recurrencerelationkodd} and \ref{thm:recurrence relation even k} with a few modifications. 
To   derive this  recurrence relation  first  consider  $h$  in expression
\ref{eq: normal form for reciprocal based at order k}.  Now, we count the number of
$h$ that start with $ab^{1}$. There are $\mathcal{N}_{2(t-2)}$ 
and since there are the same number for  $ab^{-1}$, there are
there are 
$2|\mathcal{N}_{2(t-2)}|$. Similarly, we count the number of 
$h$ that start with   
$ab^{2},  ab^{-2}, ab^{3},ab^{-3}, ..., ab^{m-1}, ab^{-(m-1)}$.
Finally,  since $ab^{m}=ab^{-m}$,  the last term in the recurrence relation is not doubled. Hence,  we have derived the recurrence  relation
$$
|\mathcal{N}_{2t}| = 2|\mathcal{N}_{2(t-2)}|  + 2|\mathcal{N}_{2(t-3)}|  \dots + 
2|\mathcal{N}_{2(t-m)}| + |\mathcal{N}_{2(t-(m+1))}|
$$ 
\end{proof}

\begin{lem}\label{lem: recurrence asymtotic for base at k}
$|\mathcal{N}_{2t}| \sim e_{2m} \beta_{2m} ^{t}$, as $t \to \infty$. 
Where  $e_{2m}$ is the coefficient of $\beta_{2m}^{t}$ in the solution of the linear system determined by  the recurrence relation (\ref{eq: recurrence for rec. word based at k}).
\end{lem}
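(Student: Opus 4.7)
The plan is to mirror the argument of Lemma \ref{lem: recurrence asymptotic}, since the recurrence (\ref{eq: recurrence for rec. word based at k}) has exactly the same shape as the recurrence (\ref{eq: normal form recursion}) in the order 2 case, only the initial conditions differ. First, I would rewrite item (iv) of Proposition \ref{thm:recurrence relation even k based at order k} as the homogeneous linear recurrence
\begin{equation*}
|\mathcal{N}_{2t}| - 2|\mathcal{N}_{2(t-2)}| - 2|\mathcal{N}_{2(t-3)}| - \dots - 2|\mathcal{N}_{2(t-m)}| - |\mathcal{N}_{2(t-(m+1))}| = 0, \qquad t > m+3,
\end{equation*}
and read off the characteristic polynomial as
\begin{equation*}
x^{m+1} - 2x^{m-1} - 2x^{m-2} - \dots - 2x - 1,
\end{equation*}
which is exactly the polynomial appearing in Theorem B and Lemma \ref{lem: recurrence asymptotic}.

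Second, I would invoke Lemma \ref{lem: dominant root} (from Section \ref{sec: poly properties}) to conclude that this polynomial has a unique positive real root $\beta_{2m}$, that this root is strictly dominant in modulus among all (complex) roots, and that all roots are simple. Consequently, standard linear recurrence theory gives the closed form
\begin{equation*}
|\mathcal{N}_{2t}| = e_{2m}\beta_{2m}^{t} + \sum_{j} c_j \lambda_j^{t},
\end{equation*}
where the $\lambda_j$ are the remaining roots (all of strictly smaller modulus) and the coefficients $e_{2m}, c_j$ are uniquely determined by the initial values recorded in items (i)--(iii) of Proposition \ref{thm:recurrence relation even k based at order k}. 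Dividing through by $\beta_{2m}^{t}$ and letting $t \to \infty$ makes the sum over the $\lambda_j$ vanish, yielding $|\mathcal{N}_{2t}|/\beta_{2m}^{t} \to e_{2m}$, which is the asymptotic claim modulo showing $e_{2m} \neq 0$.

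The only real step that requires any thought is verifying that the dominant coefficient $e_{2m}$ is strictly positive, since from the recurrence alone one might worry that it is zero. I would argue this as follows: the sequence $|\mathcal{N}_{2t}|$ is nonnegative for all $t$, it is nonzero infinitely often (by item (iii), $|\mathcal{N}_{2(m+3)}| = 2$, and the recurrence has nonnegative coefficients, so positivity propagates), and all subdominant terms $c_j \lambda_j^{t}$ are $o(\beta_{2m}^{t})$. If $e_{2m}$ were zero, then $|\mathcal{N}_{2t}| = o(\beta_{2m}^{t})$, but one could also argue from below using the fact that the recurrence itself forces $|\mathcal{N}_{2t}| \geq |\mathcal{N}_{2(t-(m+1))}|$ and hence eventually $|\mathcal{N}_{2t}| \geq C \beta_{2m}^{t}$ for some $C > 0$; a cleaner alternative is to defer to Section \ref{sec: linear system}, where the linear system determining the coefficients is solved explicitly and positivity is checked. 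In either case, $e_{2m} > 0$, and the asymptotic $|\mathcal{N}_{2t}| \sim e_{2m}\beta_{2m}^{t}$ follows. This positivity, rather than the recurrence analysis itself, is the only place where the proof is not pure formality.
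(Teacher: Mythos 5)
Your proposal is correct and takes essentially the same route as the paper's proof, which likewise just reads off the characteristic polynomial of the recurrence (\ref{eq: recurrence for rec. word based at k}), invokes Lemma \ref{lem: dominant root} to get a simple, strictly dominant positive root $\beta_{2m}$, and concludes $|\mathcal{N}_{2t}| \sim e_{2m}\beta_{2m}^{t}$ from the general solution of the linear recurrence. Your extra care over $e_{2m}>0$ goes beyond what the paper records (it implicitly defers this to the explicit computation in Section \ref{sec: linear system}) and is a reasonable supplement, though note that the single inequality $|\mathcal{N}_{2t}|\ge|\mathcal{N}_{2(t-(m+1))}|$ does not by itself give a lower bound of order $\beta_{2m}^{t}$; one needs the full recurrence plus an induction starting from a window of $m+1$ consecutive positive values.
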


\begin{proof}
The characteristic polynomial corresponding to the linear recurrence relation 
(\ref{eq: recurrence for rec. word based at k}) is
$$
x^{m+1}-2x^{m-1}-2x^{m-2}-...-2x-1
$$
Hence by Lemma   \ref{lem: dominant root}, the dominant root is $\beta_{2m}$ and $e_{2m}$ is  its coefficient.
\end{proof}

We next show that the $\mathcal{B}$-type words are negligible
in the set of normalized reciprocal words based at the order $k$ cone point. 

\begin{lem}[$\mathcal{B}$-type words grow slowly]
\label{lem:Btypebased at k small}
\[\displaystyle\lim_{t\to\infty}\dfrac{|\mathcal{B}_{2t}|}{|\mathcal{N}_{2t}|}=0.\]

\begin{equation}
\label{eqn:atypesdominate for base k}
|\mathcal{N}_{2t}|\sim|\mathcal{N}_{2t}-\mathcal{B}_{2t}|, \textrm{as}\,\,t\to\infty.
\end{equation}
\end{lem}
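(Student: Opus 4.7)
The proof should follow essentially verbatim the argument of Lemma \ref{lem:btypesslow}, since the two situations are structurally identical: we have an asymptotic growth estimate for $|\mathcal{N}_{2t}|$ whose base exceeds $\sqrt{2}$, and a general upper bound on $|\mathcal{B}_{2t}|$ which grows strictly slower than any $\alpha^t$ with $\alpha > \sqrt{2}$.

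Concretely, the plan is as follows. First, invoke Lemma \ref{lem: recurrence asymtotic for base at k} to get $|\mathcal{N}_{2t}| \sim e_{2m}\beta_{2m}^t$ as $t\to\infty$. Next, appeal to the elementary fact recorded in the Remark following Theorem B that $\sqrt{2} < \beta_{2m} < 2$, so that $\beta_{2m}$ is an admissible choice of $\alpha$ in Lemma \ref{lem:Reciprocal thru both cone points}. That lemma then yields
\begin{equation*}
\lim_{t\to\infty}\frac{|\mathcal{B}_{2t}|}{\beta_{2m}^t}=0.
\end{equation*}

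Writing
\begin{equation*}
\frac{|\mathcal{B}_{2t}|}{|\mathcal{N}_{2t}|} = \frac{|\mathcal{B}_{2t}|}{\beta_{2m}^t}\cdot\frac{\beta_{2m}^t}{|\mathcal{N}_{2t}|},
\end{equation*}
the first factor tends to $0$ and the second factor tends to $1/e_{2m}$, so the product tends to $0$. This proves the first displayed limit. The second assertion, $|\mathcal{N}_{2t}|\sim |\mathcal{N}_{2t}-\mathcal{B}_{2t}|$, follows immediately, since
\begin{equation*}
\frac{|\mathcal{N}_{2t}-\mathcal{B}_{2t}|}{|\mathcal{N}_{2t}|} = 1 - \frac{|\mathcal{B}_{2t}|}{|\mathcal{N}_{2t}|} \xrightarrow{t\to\infty} 1.
\end{equation*}

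There is no real obstacle here beyond citing the right prior results; the only point worth double-checking is that $\beta_{2m}>\sqrt{2}$ genuinely holds for every $m\ge 2$, which is precisely the content of the elementary remark on dominant roots (and will be reconfirmed in the polynomial properties section). In particular, note that the two ingredients—the asymptotic for $|\mathcal{N}_{2t}|$ and the subexponential control $|\mathcal{B}_{2t}|=O(t(\sqrt{2})^{t})$—are independent of whether we are based at the order-$2$ or order-$k$ cone point, so the same proof template transfers without modification.
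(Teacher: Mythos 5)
Your proposal is correct and follows essentially the same route as the paper: the paper likewise combines Lemma \ref{lem: recurrence asymtotic for base at k} (giving $|\mathcal{N}_{2t}|\sim e_{2m}\beta_{2m}^{t}$) with Lemma \ref{lem:Reciprocal thru both cone points} applied to $\alpha=\beta_{2m}>\sqrt{2}$, and then deduces the second asymptotic from the decomposition of $\mathcal{N}_{2t}$. The only difference is that you spell out the factorization of $|\mathcal{B}_{2t}|/|\mathcal{N}_{2t}|$ explicitly, which the paper leaves implicit.
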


\begin{proof} This proof follows closely the proof of Lemma
\ref{lem:btypesslow}.
By Lemma \ref{lem: recurrence asymtotic for base at k} we know  $|\mathcal{N}_{2t}| \sim e_{2m} \beta_{2m} ^{t}$ and since  $\beta_{2m} >\sqrt{2}$  we can conclude by  Lemma \ref{lem:Reciprocal thru both cone points} that 
\[\displaystyle\lim_{t\to\infty}\dfrac{|\mathcal{B}_{2t}|}{|\mathcal{N}_{2t}|}=0.\]

The second asymptotic follows 
from  the fact that  $\mathcal{N}_{2t}$ is the disjoint union of 
 $\mathcal{N}_{2t}-\mathcal{B}_{2t}$   and $\mathcal{B}_{2t}$.\end{proof}

\begin{thm}\label{thm: k even based at order k}
Suppose   
$X_{k}$ is the $(2,k,\infty)$ Hecke surface with $k=2m$  even. Then
\begin{equation}
\frac{\big|\{\gamma \text{ reciprocal geodesic based at order 
$k$ cone point}: |\gamma|=2t\}\big|}
{\beta_{2m}^{t}} \xrightarrow{t\to\infty} \frac{e_{2m}}{2}
\end{equation}
where $\beta_{2m}$ is the unique positive root of the polynomial

\begin{equation}\label{eq: poly based at k}
  x^{m+1} -2x^{m-1} -2x^{m-2}- \dots -2x-1 , \text{ for k=2m}   
\end{equation}
and $e_{2m}$ is the coefficient of $\beta_{2m}^{t}$ in the solution of the linear system determined by  the recurrence relation (\ref{eq: recurrence for rec. word based at k}). The same asymptotic holds for the primitive reciprocal geodesics based at the order $k$ cone point. 

\end{thm}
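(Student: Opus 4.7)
The plan is to mirror the argument used for Theorem \ref{thm:binetz2zkevenforrecursion}, but now applied to reciprocal words based at the order $k$ cone point. Let me write $R_{2t} = \{\gamma \subset X_k \text{ reciprocal geodesic based at order } k : |\gamma| = 2t\}$. By Proposition \ref{prop: normal form based at order k}, every such $\gamma$ is represented either by a power of a primitive word of the form $[b^m, h]$ (with $h$ an $(aa)$-word), contributing exactly two conjugate representatives $[b^m, h]^n$ and $[b^m, h^{-1}]^n$ in $\mathcal{N}$, or by a $\mathcal{B}$-type word which pops up in $\mathcal{N}$ without a distinct conjugate partner. Consequently,
\begin{equation*}
\tfrac{1}{2}\,|\mathcal{N}_{2t} - \mathcal{B}_{2t}| \;\leq\; |R_{2t}| \;\leq\; \tfrac{1}{2}\,|\mathcal{N}_{2t} - \mathcal{B}_{2t}| + |\mathcal{B}_{2t}|.
\end{equation*}

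Next I would divide through by $|\mathcal{N}_{2t}|$ and take limits. By Lemma \ref{lem: recurrence asymtotic for base at k} we have $|\mathcal{N}_{2t}| \sim e_{2m} \beta_{2m}^{t}$, and since $\beta_{2m} > \sqrt{2}$, Lemma \ref{lem:Btypebased at k small} forces $|\mathcal{B}_{2t}|/|\mathcal{N}_{2t}| \to 0$ and hence $|\mathcal{N}_{2t} - \mathcal{B}_{2t}|/|\mathcal{N}_{2t}| \to 1$. Combining the squeeze yields $|R_{2t}|/|\mathcal{N}_{2t}| \to 1/2$, and multiplying by the known asymptotic $|\mathcal{N}_{2t}|/\beta_{2m}^{t} \to e_{2m}$ gives the desired limit
\begin{equation*}
\frac{|R_{2t}|}{\beta_{2m}^{t}} \;=\; \frac{|R_{2t}|}{|\mathcal{N}_{2t}|} \cdot \frac{|\mathcal{N}_{2t}|}{\beta_{2m}^{t}} \;\xrightarrow{t\to\infty}\; \frac{e_{2m}}{2}.
\end{equation*}

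For the primitive assertion, I would mimic the divisor-counting estimate from the proof of Theorem \ref{thm:binetz2zkoddforrecursion}. The non-primitive reciprocal words in $R_{2t}$ that are not $\mathcal{B}$-type are honest powers $[b^m,h]^n$ with $n\geq 2$ of primitive reciprocal words of length $2s$ where $s \mid t$ and $s \leq t/2$. Summing the upper bound $|R_{2s}| \leq C\,\beta_{2m}^{s}$ over proper divisors gives a bound on the order of $t \cdot \beta_{2m}^{t/2}$, which is negligible relative to $\beta_{2m}^{t}$. Together with Lemma \ref{lem:Btypebased at k small} (which controls the other non-primitive contribution, namely $\mathcal{B}$-type words), this shows that primitives and all reciprocal geodesics share the same leading-order asymptotic.

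The main obstacle, as in the order-$2$ case, is ensuring that the ``slightly strange'' $\mathcal{B}$-type normal forms really are negligible: they have no partner in $\mathcal{N}$, so the clean factor of $\tfrac{1}{2}$ arising from Proposition \ref{prop: normal form based at order k} only applies after subtracting them off. Fortunately this has been pre-packaged into Lemma \ref{lem:Btypebased at k small}, whose strength relies on the bound $\beta_{2m} > \sqrt{2}$ coming from Lemma \ref{lem: dominant root}, so the present theorem reduces to cleanly assembling these ingredients.
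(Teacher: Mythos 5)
Your proposal is correct and follows essentially the same route as the paper: the same squeeze $\tfrac{1}{2}|\mathcal{N}_{2t}-\mathcal{B}_{2t}| \leq |R_{2t}| \leq \tfrac{1}{2}|\mathcal{N}_{2t}-\mathcal{B}_{2t}|+|\mathcal{B}_{2t}|$ from Proposition \ref{prop: normal form based at order k}, the same appeal to Lemmas \ref{lem: recurrence asymtotic for base at k} and \ref{lem:Btypebased at k small}, and the same two-part treatment of non-primitives (divisor-counting for powers of $[b^m,h]$ plus negligibility of $\mathcal{B}$-type words). Your write-up of the primitive case is in fact slightly more explicit than the paper's, which merely cites the $k$ odd argument.
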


\begin{proof}
This proof follows fairly closely the proof  Theorem \ref{thm:binetz2zkevenforrecursion}.
Let $R_{2t}$ be the set of reciprocal geodesics based at the cone point of order $2m$  having  length $2t$.
Equivalently these are conjugacy classes of  reciprocal hyperbolic elements  which can be written as the product of two involutions each conjugate to $B^{m}$ in $\Gamma_{2m}$, $k=2m$. 

According to  Proposition \ref{prop: normal form based at order k} item (1),  there are two   normal forms that are conjugate  in 
$\mathcal{N}_{2t}-\mathcal{B}_{2t}$ and thus  represent the same element in $R_{2t}$.
Hence
\begin{equation}\label{eqn:conjclass}
\frac{1}{2}|\mathcal{N}_{2t}-\mathcal{B}_{2t}| \leq |R_{2t}| \leq \frac{1}{2}|\mathcal{N}_{2t}-\mathcal{B}_{2t}|+|B_{2t}|.\end{equation}

Dividing  by $|\mathcal{N}_{2t}|$:

\[\frac{1}{2}\frac{|\mathcal{N}_{2t}-\mathcal{B}_{2t}|}{|\mathcal{N}_{2t}|}\leq
\frac{|R_{2t}|}{|\mathcal{N}_{2t}|}\leq \frac{1}{2}\frac{|\mathcal{N}_{2t}-\mathcal{B}_{2t}|}{|\mathcal{N}_{2t}|}+\frac{|\mathcal{B}_{2t}|}{|\mathcal{N}_{2t}|}.\]

Finally letting  $t\to\infty$ and using  the two asymptotics in Lemma  
\ref{lem:Btypebased at k small}, we have \[\displaystyle\lim_{t\to\infty}\frac{|R_{2t}|}{|\mathcal{N}_{2t}|}=\frac{1}{2}.\]
Lastly by  Lemma \ref{lem: recurrence asymtotic for base at k}

 \begin{equation}
\frac{|R_{2t}|}{\beta_{2m}^{t}}=\frac{|R_{2t}|}{|\mathcal{N}_{2t}|}
\frac{|\mathcal{N}_{2t}|}{\beta_{2m}^{t}}
\xrightarrow{t\to\infty} \frac{e_{2m}}{2}
\end{equation}

The same asymptotic  holds for the primitive reciprocal geodesics based at the order $k$  cone point since the $\mathcal{B}$-type ones by Lemma \ref{lem:Reciprocal thru both cone points} 
 are negligible since $\beta_{2m} >\sqrt{2}$ and the non-primitive  words in $\mathcal{N}_{2t}-B_{2t}$ are powers of primitive
  $\mathcal{N}_{2t}-B_{2t}$ (as in the $k$ odd case).
\end{proof}

 %%%%%%%%%Counting conjugacy classes of reciprocal words in   $\mathbb{Z}_2\ast \mathbb{Z}_k$%%%%%%%%%%%

\section{Growth in $\mathbb{Z}_2\ast \mathbb{Z}$}
\label{sec: rec. growth based at order infinity}

In this section we count conjugacy classes of reciprocal words in $G = \mathbb{Z}_2\ast \mathbb{Z}$.  Let $a$ be a generator of $\mathbb{Z}_2$ and we let $b$ be a generator of $\mathbb{Z}$. Since reciprocal words have even length we use the parameter $2t$ for  ease of  computation.

\begin{prop}\label{prop:recurrence}
	Let $G=\mathbb{Z}_2\ast\mathbb{Z}$.  Then  
	$|\mathcal{N}_{2t}|=\frac{1}{3}(2^{t}+2(-1)^{t})$.
\end{prop}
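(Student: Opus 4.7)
The plan is to reduce the count to $(bb)$-words of prescribed length, derive a two-term linear recurrence via a telescoping argument, and solve that recurrence in closed form.

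By item (1) of Lemma \ref{lem: normal forms}, every element of $\mathcal{N}_{2t}$ has the form $[a,h]=ahah^{-1}$ with $h$ a $(bb)$-word in $\mathbb{Z}_{2}\ast\mathbb{Z}$, whose $b$-exponents range over $E_{\infty}=\mathbb{Z}\setminus\{0\}$. Since $|[a,h]|=2+2|h|$, setting this equal to $2t$ forces $|h|=t-1$. Thus $|\mathcal{N}_{2t}|$ is exactly the number of $(bb)$-words of length $t-1$.

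Next I would condition on the first syllable $b^{x_{0}}$ of $h=b^{x_{0}}ab^{x_{1}}\cdots ab^{x_{n-1}}$. Either $n=1$, in which case $h=b^{\pm(t-1)}$ contributes exactly $2$; or $n\geq 2$, in which case $h=b^{\pm j}a h''$ with $j\in\{1,\dots,t-3\}$ and $h''$ a $(bb)$-word of length $t-2-j$. Because $(bb)$-words of length $\ell$ are counted by $|\mathcal{N}_{2(\ell+1)}|$, reindexing via $k=t-1-j$ yields the cumulative identity
\begin{equation*}
|\mathcal{N}_{2t}|=2+2\sum_{k=2}^{t-2}|\mathcal{N}_{2k}|, \qquad t\geq 3.
\end{equation*}
Subtracting the analogous identity written for $t-1$ telescopes the sum and produces the compact two-term recurrence
\begin{equation*}
|\mathcal{N}_{2t}|=|\mathcal{N}_{2(t-1)}|+2|\mathcal{N}_{2(t-2)}|, \qquad t\geq 4.
\end{equation*}

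The characteristic polynomial $x^{2}-x-2=(x-2)(x+1)$ has distinct roots, so the general solution is $|\mathcal{N}_{2t}|=A\cdot 2^{t}+B\cdot(-1)^{t}$. A direct enumeration gives $|\mathcal{N}_{4}|=2$ (only $h=b^{\pm 1}$) and $|\mathcal{N}_{6}|=2$ (only $h=b^{\pm 2}$, since no two-syllable word has length $2$). Solving the resulting $2\times 2$ linear system yields $A=\tfrac{1}{3}$ and $B=\tfrac{2}{3}$, giving the claimed closed form $|\mathcal{N}_{2t}|=\tfrac{1}{3}(2^{t}+2(-1)^{t})$.

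The main subtlety is that the exponents on $b$ lie in the \emph{infinite} set $E_{\infty}$, so in contrast to the situation in Propositions \ref{thm:recurrencerelationkodd} and \ref{thm:recurrence relation even k} there is no finite recurrence available immediately from first principles. The telescoping step is precisely the device that trades this a priori infinite-memory recurrence for a well-behaved two-term one.
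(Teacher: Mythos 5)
Your proof is correct, and it lands on exactly the same two-term recurrence $|\mathcal{N}_{2t}|=|\mathcal{N}_{2(t-1)}|+2|\mathcal{N}_{2(t-2)}|$ and the same characteristic-polynomial solution as the paper; the only genuine difference is how that recurrence is derived. The paper defines a three-branch map $\varphi:\mathcal{N}_{2t}\to\mathcal{N}_{2(t-1)}\cup\mathcal{N}_{2(t-2)}$ on the palindromic normal forms which decrements the leading exponent by one unit when $|x_1|>1$ (injectively onto part of $\mathcal{N}_{2(t-1)}$) and deletes the leading syllable when $x_1=\pm1$ (two-to-one onto $\mathcal{N}_{2(t-2)}$), so the recurrence is read off directly from the fiber sizes. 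You instead strip the entire leading syllable, which, because $E_\infty$ is infinite, first yields the full-history identity $|\mathcal{N}_{2t}|=2+2\sum_{k=2}^{t-2}|\mathcal{N}_{2k}|$, and you then telescope. These are equivalent ways of taming the unbounded exponent set: the paper's $|x_1|>1$ branch absorbs the infinite tail of your sum one unit at a time, while your subtraction step does the same bookkeeping after the fact. Your version makes the contrast with the finite-memory recursions of Propositions \ref{thm:recurrencerelationkodd} and \ref{thm:recurrence relation even k} explicit, at the cost of only establishing the two-term recurrence for $t\geq 4$; the paper's map is marginally more economical. The different choice of initial conditions ($|\mathcal{N}_4|=|\mathcal{N}_6|=2$ versus the paper's $|\mathcal{N}_2|=0$, $|\mathcal{N}_4|=2$) is immaterial, and both correctly produce $\tfrac{1}{3}\bigl(2^{t}+2(-1)^{t}\bigr)$.
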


\begin{proof}
	We begin by showing that the following recurrence relation holds for reciprocal words in $G$.

\begin{equation}\label{eqn:recurrence}
|\mathcal{N}_{2t}|=|\mathcal{N}_{2(t-1)}|+2|\mathcal{N}_{2(t-2)}|
\end{equation}

To prove this, we consider a mapping, $\varphi: \mathcal{N}_{2t}\to\mathcal{N}_{2(t-1)}\cup\mathcal{N}_{2(t-2)}$.  

Let $w=ab^{x_1}ab^{x_2}\dots ab^{x_n}ab^{-x_n}\dots ab^{-x_2}ab^{-x_1}\in\mathcal{N}_{2t}$.  Then we define $\varphi$ as the following piecewise function.

\begin{equation}\label{eqn:phimap}
\varphi(w)=
\begin{cases}
ab^{x_1-1}ab^{x_2}\dots ab^{x_n}ab^{-x_n}\dots ab^{-x_2}ab^{-x_1+1}\in\mathcal{N}_{2(t-1)}, & \text{if}\,\, x_1>1\\
ab^{x_1+1}ab^{x_2}\dots ab^{x_n}ab^{-x_n}\dots ab^{-x_2}ab^{-x_1-1}\in\mathcal{N}_{2(t-1)}, & \text{if}\,\, x_1<-1\\
ab^{x_2}\dots ab^{x_n}ab^{-x_n}\dots ab^{-x_2}\in\mathcal{N}_{2(t-2)}, & \text{if}\,\, x_1=\pm 1
\end{cases}
\end{equation}

The mapping $\varphi$ is surjective and the first two branches  of the map are injective, while the last branch  is clearly 2-1.  This establishes (\ref{eqn:recurrence}).

	Following the standard method for recurrence relations, we use equation (\ref{eqn:recurrence}) to get the corresponding characteristic equation,  
	
	\begin{equation*}
	x^2 - x - 2 = 0.
	\end{equation*}
	
	The roots are $ x =-1 $ and $ x =2  $, and this yields the closed form
	
	\begin{equation*}
	|\mathcal{N}_{2t}| = c_12^t + c_2(-1)^t.
	\end{equation*}
	
	We then use initial values of the sequence ($|\mathcal{N}_2|=0$ and $|\mathcal{N}_4|=2$) to solve for $c_1$ and $c_2$ and we get,
	
	\begin{equation}\label{eqn:recwordcount}
	|\mathcal{N}_{2t}|=\frac{1}{3}\left(2^t+2(-1)^t\right).
	\end{equation}
	\end{proof}

Our goal in this section is to prove

\begin{thm}\label{thm:growth prim conjugacy special commutators}
	Let  $R$ be the set of  conjugacy classes  of reciprocal words in $\mathbb{Z}_{2}\ast \mathbb{Z}$.   Then
\begin{equation}
	 |R_{2t}| \sim \frac{1}{6} 2^{t}, \text{  as  } t \rightarrow \infty,
	\end{equation}
and the primitive reciprocal conjugacty classes have the same growth rate. 	
\end{thm}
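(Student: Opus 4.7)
The plan is to combine the exact count of normal forms from Proposition~\ref{prop:recurrence} with the two-to-one conjugacy correspondence already established in Proposition~\ref{prop: normal form based at order 2}. The orbifold $X_{\infty}$ has only one finite-order cone point (of order two), so no reciprocal geodesic can pass through two even-order cone points and in particular there are no $\mathcal{B}$-type words in this setting. Consequently, Proposition~\ref{prop: normal form based at order 2} reduces to its first alternative: every conjugacy class $[w]\in R_{2t}$ has exactly two representatives in $\mathcal{N}_{2t}$, namely $[a,h]^{n}$ and $[a,h^{-1}]^{n}$, where $[a,h]$ is the primitive root. Hence
\begin{equation*}
|R_{2t}|=\tfrac{1}{2}|\mathcal{N}_{2t}|.
\end{equation*}

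Substituting the closed form $|\mathcal{N}_{2t}|=\tfrac{1}{3}\bigl(2^{t}+2(-1)^{t}\bigr)$ from Proposition~\ref{prop:recurrence} gives
\begin{equation*}
|R_{2t}|=\tfrac{1}{6}\bigl(2^{t}+2(-1)^{t}\bigr)\sim \tfrac{1}{6}\,2^{t},
\end{equation*}
which proves the first assertion. Note there is nothing asymptotic to verify about the coefficient here beyond the trivial observation that the $(-1)^{t}$ term is negligible compared to $2^{t}$; the rigidity of the $\mathbb{Z}_{2}\ast\mathbb{Z}$ case is that everything is exact.

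For the primitive statement, I would mimic the argument used in Theorem~\ref{thm:binetz2zkoddforrecursion}. Let $R_{2t}^{p}$ denote the primitive reciprocal conjugacy classes and $R_{2t}^{np}$ the non-primitive ones. Every non-primitive reciprocal word of length $2t$ is an $s$-th power ($s\geq 2$) of a unique primitive reciprocal word of length $2t/s$ (using Lemma~\ref{lem: conjugates}, items (3)--(5), together with the fact that a power of a reciprocal element is reciprocal). Therefore
\begin{equation*}
|R_{2t}^{np}|\;\leq\;\sum_{\substack{s\mid t\\ s\geq 2}}|R_{2t/s}|\;\leq\;\tau(t)\cdot|R_{t}|\;\leq\;C\,t\cdot 2^{t/2},
\end{equation*}
where $\tau(t)$ is the number of divisors of $t$ (bounded by $t/2$ for $t\geq 2$). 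Since $t\cdot 2^{t/2}=o(2^{t})$, we conclude $|R_{2t}^{np}|/2^{t}\to 0$, and therefore $|R_{2t}^{p}|\sim|R_{2t}|\sim\tfrac{1}{6}\,2^{t}$.

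There is no real obstacle in this section: the hard combinatorial work has already been done in Proposition~\ref{prop:recurrence}, and the normal form analysis of Proposition~\ref{prop: normal form based at order 2} collapses in the $k=\infty$ case because no $\mathcal{B}$-type words exist. The only small point to be careful about is pointing out explicitly why the second alternative of Proposition~\ref{prop: normal form based at order 2} is vacuous when $k=\infty$, and noting that the primitives argument is a near-verbatim repetition of the one given in the proof of Theorem~\ref{thm:binetz2zkoddforrecursion}, with the dominant root $2$ in place of $\alpha_{2m+1}$.
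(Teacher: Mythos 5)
Your proposal is correct and follows essentially the same route as the paper: the exact identity $|R_{2t}|=\tfrac12|\mathcal{N}_{2t}|=\tfrac16\bigl(2^{t}+2(-1)^{t}\bigr)$ from Propositions \ref{prop:recurrence} and \ref{prop: normal form based at order 2}, followed by the divisor-sum bound $|R^{np}_{2t}|=O\bigl(t\,2^{t/2}\bigr)=o(2^{t})$ to transfer the asymptotic to the primitive classes, which is exactly the content of Lemma \ref{lem:nonprimitive bound} and the sandwich argument in the paper's proof. Your explicit remark that the $\mathcal{B}$-type alternative is vacuous for $k=\infty$ is a point the paper leaves implicit, but it does not change the argument.
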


Denote the primitive and non-primitive reciprocal geodesics of length $2t$ by $R^{p}_{2t}$  and $R^{np}_{2t}$, respectively.

\begin{lem}\label{lem:nonprimitive bound} For  $G=\mathbb{Z}_2\ast \mathbb{Z}$,
	\begin{equation}
	|R^{np}_{2t}|\leq  \frac{1}{6} t  2^{t/2}.
	\end{equation}        
\end{lem}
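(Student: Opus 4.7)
The plan is to mirror the argument at the end of Theorem \ref{thm:binetz2zkoddforrecursion}: every non-primitive reciprocal conjugacy class of length $2t$ in $G = \mathbb{Z}_2 \ast \mathbb{Z}$ is an $n$-th power, $n \ge 2$, of a shorter primitive reciprocal class, and these shorter classes are controlled by the explicit formula of Proposition \ref{prop:recurrence}.

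First I would establish the identity
$$|R^{np}_{2t}| \;=\; \sum_{\substack{s \,\mid\, t \\ 1 \le s < t}} |R^p_{2s}|.$$
By Lemma \ref{lem: conjugates}(5) every infinite-order element of $G$ has a unique primitive root $w_0$, and Lemma \ref{lem: conjugates}(4) gives $|[w]| = n\,|[w_0]|$, with $n \ge 2$ precisely for non-primitive classes. The content is to verify that if $w$ is reciprocal then so is $w_0$. Since reciprocity is a conjugacy invariant, I may replace $w$ by a conjugate so that $w = a y$ with $y$ an involution conjugate to $a$; then $a w a = w^{-1}$. Since $\langle w_0 \rangle$ is the unique maximal cyclic subgroup containing $w$, it is preserved by conjugation by $a$, so $a w_0 a = w_0^{k}$ for some $k \in \mathbb{Z}$; comparing with $a w a = w^{-1}$ forces $k = -1$. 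Hence $w_0 = a \cdot (a w_0)$ is a product of two distinct involutions, both conjugate to $a$ (the only involution conjugacy class in $\mathbb{Z}_2 \ast \mathbb{Z}$), so $w_0 \in R^p$.

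Second, in $\mathbb{Z}_2 \ast \mathbb{Z}$ no $\mathcal{B}$-type reciprocal words exist (no element plays the role of $b^m$), so Proposition \ref{prop: normal form based at order 2} applies uniformly and produces exactly two normal-form representatives per reciprocal class. Combined with Proposition \ref{prop:recurrence} this gives
$$|R^p_{2s}| \;\le\; |R_{2s}| \;=\; \tfrac{|\mathcal{N}_{2s}|}{2} \;=\; \tfrac{1}{6}\bigl(2^{s} + 2(-1)^{s}\bigr) \;\le\; \tfrac{1}{6}\bigl(2^{s} + 2\bigr).$$

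Finally, every proper divisor $s$ of $t$ satisfies $s \le t/2$, and there are at most $\lfloor t/2 \rfloor$ such divisors, so summing the displayed bound yields
$$|R^{np}_{2t}| \;\le\; \tfrac{t}{12}\bigl(2^{t/2}+2\bigr) \;\le\; \tfrac{1}{6}\, t\cdot 2^{t/2},$$
the last step using $2 \le 2^{t/2}$ for $t \ge 2$ (smaller $t$ being vacuous, since the shortest reciprocal word has length $4$ and hence no non-primitive reciprocal word exists below length $8$). The only mild subtlety is the stability of reciprocity under passage to primitive roots, for which the presence of a single involution conjugacy class in $\mathbb{Z}_2 \ast \mathbb{Z}$ is essential; everything else is a routine summation over divisors.
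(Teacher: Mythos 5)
Your proof is correct and follows essentially the same route as the paper's: the divisor-sum identity $|R^{np}_{2t}|=\sum|R^{p}_{2s}|$, the bound $|R^{p}_{2s}|\le|R_{2s}|=\tfrac{1}{6}(2^{s}+2(-1)^{s})$, and the estimates that every proper divisor is at most $t/2$ and that there are at most $t/2$ of them. The only difference is that you spell out why the primitive root of a reciprocal element is again reciprocal, a point the paper delegates to a citation of \cite{BasSuz}; this is a welcome but inessential elaboration.
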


\begin{proof}
	Fix $t$ a positive integer. All sums in this proof are over the proper divisors of $t$. Using the fact that non-primitive reciprocal words of length 
	$2t$ are in one to one correspondence with primitive reciprocals of word length $2s$ where $s$ divides $t$  we have,
	\begin{equation}\label{eq:nonprimitive computation1}
	|R^{np}_{2t}|=\sum |R^{p}_{2s}| \leq \sum |R_{2s}|=\sum \frac{1}{6}(2^{s}+2(-1)^{s})  \leq \frac{2}{6} \sum 2^{s} 
	\end{equation}
	Using the fact that the largest possible proper divisor of $t$ is $s=t/2$, we have that the right hand side of equation (\ref{eq:nonprimitive computation1}) is,
	\begin{equation}\label{eq:nonprimitive computation2}
	\leq   \frac{2}{6}   2^{t/2}\sum 1 \leq  \frac{1}{6} t  2^{t/2},
	\end{equation}
	where the last inequality follows from the fact that the most number of proper divisors an integer $t$ can have is $t/2$.

\end{proof}

\begin{proof}[Proof of Theorem \ref{thm:growth prim conjugacy special commutators}]

Using item (1) of Proposition \ref{prop: normal form based at order 2},  we know that each normal form has a conjugate. Hence, we divide  equation (\ref{eqn:recwordcount}) by 2  to  obtain the growth rate of the conjugacy classes of reciprocal words.  

To prove the same asymptotic for the primitive reciprocal words, we apply  Lemma  \ref{lem:nonprimitive bound}  to get

\begin{equation}\label{eq:bounds1}
|R_{2t}|-\frac{1}{6} t  2^{t/2} \leq  |R^{p}_{2t}|\leq  |R_{2t}|,
\end{equation}

and hence,

\begin{equation}\label{eq:bounds2}
1-\frac{\frac{1}{6} t  2^{t/2}}{|R_{2t}|} \leq \frac{|R^{p}_{2t}|}{|R_{2t}|} \leq 1.
\end{equation}

Using this inequality and the fact  that $\frac{\frac{1}{6} t  2^{t/2}}{|R_{2t}|} \longrightarrow 0$ as $t \longrightarrow \infty$ finishes the proof of 
Theorem \ref{thm:growth prim conjugacy special commutators}.
\end{proof}
%%%%%%%%%%%%%%%%%%%%%%%%%%%%%

%%%%%%%%%%%%%%%%%%%%%%%%%%%%%

\section{Properties of the polynomial families} \label{sec: Summary}\label{sec: poly properties}

Consider the two polynomial families,

\begin{equation}\label{eq: polynomial and roots}
\left\{
\begin{array}{lr }
    P_{2m+1}(x)=   x^{m+1} -2x^{m-1}-2x^{m-2} - \dots -2x-2, \text{ for $m\geq 2, k=2m+1$}    \\
  Q_{2m}(x)=x^{m+1} -2x^{m-1} -2x^{m-2}- \dots -2x-1, \text{ for $m \geq 2, k=2m$}  \\
\end{array}
\right\}
\end{equation}

\begin{lem} \label{lem: dominant root}
Each of the polynomials $Q_{2m}$ and  $P_{2m+1}$  in expression (\ref{eq: polynomial and roots})  have distinct  complex roots  and   a unique positive real root which is dominant over its other  roots.  The polynomial $P_{2m+1}$ is irreducible over $\mathbb{Q}$.
\end{lem}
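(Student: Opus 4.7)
The plan is to establish the three assertions—unique positive real root, its dominance, distinct complex roots, and irreducibility of $P_{2m+1}$—separately, treating the two families in parallel wherever possible.

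\textbf{Existence and uniqueness of the positive real root.} For each polynomial I would apply Descartes' rule of signs: after the leading $+1$, every remaining nonzero coefficient is negative, so there is exactly one sign change and hence exactly one positive real root. Equivalently, dividing through by $x^{m+1}$ produces a strictly increasing function of $x>0$ crossing zero once.

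\textbf{Dominance.} The heart of the argument is a triangle inequality applied to the root equation. For any root $z$ of $P_{2m+1}$,
\[
z^{m+1} = 2\bigl(1+z+z^2+\cdots+z^{m-1}\bigr),
\]
so $|z|^{m+1} \leq 2\bigl(1+|z|+\cdots+|z|^{m-1}\bigr)$, i.e.\ $P_{2m+1}(|z|)\leq 0$. Since $P_{2m+1}$ is negative on $[0,\alpha_{2m+1})$ and positive on $(\alpha_{2m+1},\infty)$, this forces $|z|\leq\alpha_{2m+1}$. Equality in the triangle inequality requires $1,z,z^2,\dots,z^{m-1}$ to share a common argument; since $1$ is positive real, so must be $z$, giving $z=\alpha_{2m+1}$. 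Every other root therefore satisfies $|z|<\alpha_{2m+1}$. The identical argument works for $Q_{2m}$ with $\beta_{2m}$.

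\textbf{Irreducibility of $P_{2m+1}$ and distinctness of its roots.} Eisenstein's criterion at the prime $2$ applies directly: the leading coefficient is $1$, every other coefficient is $0$ or $-2$ (all divisible by $2$), and the constant term $-2$ is not divisible by $4$. Hence $P_{2m+1}$ is irreducible over $\mathbb{Q}$, and since $\mathbb{Q}$ has characteristic zero, $\gcd(P_{2m+1},P'_{2m+1})=1$, so $P_{2m+1}$ has distinct complex roots.

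\textbf{Distinctness of the roots of $Q_{2m}$.} A short expansion yields the factorization
\[
Q_{2m}(x) \;=\; (x+1)\bigl(x^m - x^{m-1} - x^{m-2} - \cdots - x - 1\bigr) \;=\; (x+1)\,R_m(x),
\]
where $R_m$ is the familiar $m$-bonacci polynomial. Evaluating $R_m(-1)=(3(-1)^m-1)/2$ shows the two factors are coprime. To see $R_m$ has distinct roots I use the identity $(x-1)R_m(x)=x^{m+1}-2x^m+1$: any multiple root $z_0$ of $R_m$ is necessarily distinct from $1$ and must also annihilate $\tfrac{d}{dx}(x^{m+1}-2x^m+1)=x^{m-1}((m+1)x-2m)$, forcing $z_0=2m/(m+1)$. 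Substituting back reduces the question to whether $(2m/(m+1))^m=(m+1)/2$.

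\textbf{Main obstacle.} The sole nontrivial step is excluding the value $z_0=2m/(m+1)$, i.e.\ verifying $(2m/(m+1))^m \neq (m+1)/2$ for every $m\geq 2$. I would define
\[
\phi(m) \;=\; m\log\!\Bigl(\tfrac{2m}{m+1}\Bigr) - \log\!\Bigl(\tfrac{m+1}{2}\Bigr),
\]
compute $\phi'(m)=\log(2m/(m+1))>0$ for $m\geq 2$, so $\phi$ is strictly increasing, and check $\phi(2)=\log(32/27)>0$. Hence $\phi(m)>0$ for all $m\geq 2$, so $z_0$ is strictly below $\beta_{2m}$ (equivalently $R_m(z_0)<0$), contradicting $z_0$ being a root of $R_m$. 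This completes the distinctness claim for $Q_{2m}$.
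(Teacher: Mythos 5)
Your proof is correct, and its skeleton matches the paper's: Descartes' rule of signs for the unique positive root, a triangle-inequality analysis of $|z|$ for dominance, Eisenstein at $2$ for $P_{2m+1}$, and the factorization $Q_{2m}(x)=(x+1)R_m(x)$ with $R_m(-1)\neq 0$ for the even family. You diverge from the paper in two places, both to your advantage in self-containedness. First, for dominance the paper invokes the Cauchy bound to get $|z|\leq r$ and then runs the equality case separately; you obtain $P(|z|)\leq 0$, hence $|z|\leq \alpha_{2m+1}$, directly from the triangle inequality and handle the equality case in the same breath, which is tighter. Second, and more substantively, where the paper simply cites Wolfram's paper for the simplicity of the roots of the $m$-bonacci factor $R_m$, you prove it: via $(x-1)R_m(x)=x^{m+1}-2x^m+1$ any multiple root must equal $2m/(m+1)$, and your monotonicity argument for $\phi(m)=m\log\bigl(\tfrac{2m}{m+1}\bigr)-\log\bigl(\tfrac{m+1}{2}\bigr)$ (with $\phi'(m)=\log\tfrac{2m}{m+1}>0$ and $\phi(2)=\log\tfrac{32}{27}>0$) cleanly excludes it. The paper's citation is shorter; your version makes the lemma verifiable without an external reference, at the cost of the one calculus computation. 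The only cosmetic gaps are that you should note $R_m(0)=-1\neq 0$ when discarding the root $x=0$ of the derivative, and that $R_m(1)=1-m\neq 0$ for $m\geq 2$ (which in fact makes the "distinct from $1$" remark unnecessary, since a multiple root of $R_m$ is automatically a multiple root of $(x-1)R_m$); neither affects correctness.
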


\begin{proof}  Let $P$ be one of the two  polynomial types. 
Descartes rule of signs says that  
a  polynomial  with 
  real coefficients has positive real zeros equal to the number of sign changes  or is less than the number of sign changes  by an even number.  Since  $P$  has only one sign change we may conclude it has a unique positive root which we call $r$.
  
Next we show that $r$ is the dominant root.  It is a classical result  of Cauchy, called the Cauchy bound, that a polynomial 
of the form 
$$
x^{n} +a_{n-1}x^{n-1}+...+a_{0}, 
$$
has zeros in the closed disc of radius $r >0$, where $r$ is the unique positive real solution of 
$$
x^{n}- |a_{n-1|}x^{n-1}-...-|a_{0}|
$$

Next we claim that no other root of $P$ has absolute value greater than or equal to $r$.  if  $\xi$ is  a root of $ Q_{2m}$ with $|\xi|=r$, then

$$
r^{m+1}=|\xi^{m+1}|=|2\xi^{m-1} +2\xi^{m-2}+\dots +2\xi+1|
$$
$$
\leq  2|\xi|^{m-1} +2|\xi|^{m-2}+\dots +2|\xi|+1    
$$
$$
= 2r^{m-1} +2r^{m-2}+\dots +2r+1=r^{m+1}
$$
Now the only way the  inequality is an equality  for $m \geq 2$ is if 
$\xi$ is  positive real. Hence $\xi=r$, and thus $r$ is the dominant root for $Q_{2m}$.
The same argument works for the other  polynomial family
$\{P_{2m+1} \}_{m=1}^{\infty}$.  We have established that both families have a unique positive real root that is dominant over all roots. 

$P_{2m+1}$, by  the Eisenstein criteria (\cite{B-J-N}) is irreducible  over 
$\mathbb{Q}$, hence  over the integers. 
  Now any  monic irreducible  polynomial  $f$ over 
  $\mathbb{Q}$  has no repeated roots in $\mathbb{C}$. For otherwise, the greatest common divisor  of $f$ and  its derivative $f^{\prime}$ would be  non-trivial implying that $f$ is reducible over $\mathbb{Q}$. Thus 
  $P_{2m+1}$ is irreducible and has distinct roots.

We next consider the polynomial $Q_{2m}$. Here  the Eisenstein  Criterion does not apply.  Instead, we note  that the  characteristic polynomial, $Q_{2m}(z)$,  in this case can be factored as 
$$
(x+1)(x^{m} - x^{m-1} - x^{m-2}- \dots -z -1)=(x+1)S(x). 
$$
Focusing  on the second factor,   we can  rule out $-1$ as a root of the second factor since   upon substitution, 
\begin{equation}
S(-1)=
 \begin{cases}
   \text{    } 1,   & \text{if m even} \\
    -2,  & \text{if m odd}
\end{cases}
\end{equation}

Thus we see that $S(x)$  does not share a root with  the factor $(x+1)$, and  moreover, it has  simple roots  (see \cite{Wol}).  We can now  conclude that $Q_{2m}$  has distinct  roots.  
\end{proof}

Denote the positive real root  of $P_{2m+1}$ by 
$\alpha_{2m+1}$, and the positive real root of $Q_{2m}$ by 
 $\beta_{2m}$. 
 Thus we have the odd sequence of dominant roots 
 $\{\alpha_{2m+1}\}_{m=1}^{\infty}$ and the even sequence of dominant roots 
$\{\beta_{2m}\}_{m=2}^{\infty}$.

\begin{lem}[Properties of the dominant roots] Let $\alpha_{2m+1}$ be the dominant root of $P_{2m+1}$, and $\beta_{2m}$ the dominant root of $Q_{2m}$. 
\begin{enumerate}
\item The dominant roots $\{\alpha_{2m+1}\}_{m=1}^{\infty}$ of the polynomials $\{P_{2m+1}\}$ form an increasing sequence and satisfy
 $\sqrt{2} \leq  \alpha_{2m+1} \leq 2$, where 
 $\alpha_{3}=\sqrt{2}$, and $\alpha_{\infty}=2$
 \item The dominant roots $\{\beta_{2m}\}_{m=2}^{\infty}$ of the polynomials $\{Q_{2m}\}$
form an increasing sequence  and satisfy 
 $\sqrt{2} < \beta_{2m} <2$
\item $\beta_{2m} < \alpha_{2m+1}$ for $m=2,3,4,...$.
\end{enumerate}
\end{lem}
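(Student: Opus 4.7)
The plan is to reduce each claim to the evaluation of the two polynomials at a small number of test points, together with one simple ``staircase'' identity linking consecutive members of each family. First I would compute $P_{2m+1}(2)=2>0$ and $Q_{2m}(2)=3>0$ directly (using the telescoping $2(1+x+\cdots+x^{m-1})=2(x^m-1)/(x-1)$ evaluated at $x=2$); combined with the uniqueness of the positive root from Lemma~\ref{lem: dominant root}, this forces $\alpha_{2m+1}<2$ and $\beta_{2m}<2$. Next I would evaluate at $x=\sqrt{2}$: the same identity gives $(\sqrt{2}-1)P_{2m+1}(\sqrt{2})=2-(\sqrt{2})^{m+1}$, which is $0$ when $m=1$ and strictly negative for $m\geq 2$, so $\alpha_3=\sqrt{2}$ while $\alpha_{2m+1}>\sqrt{2}$ for $m\geq 2$. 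For the even family I would use the factorization $Q_{2m}(x)=(x+1)S(x)$ with $S(x)=x^m-x^{m-1}-\cdots-1$ already noted in the proof of Lemma~\ref{lem: dominant root}; a direct computation gives $S(\sqrt{2})=1+\sqrt{2}-\sqrt{2}\cdot 2^{m/2}$, which is negative for every $m\geq 2$, so $\beta_{2m}>\sqrt{2}$.

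For monotonicity, the key observation is the pair of telescoping identities
\begin{equation*}
P_{2m+3}(x)-P_{2m+1}(x)=x^m(x-2)(x+1),\qquad Q_{2m+2}(x)-Q_{2m}(x)=x^m(x-2)(x+1),
\end{equation*}
which follow immediately from the definitions. Evaluating the first at $x=\alpha_{2m+1}$ and using $\alpha_{2m+1}\in(\sqrt{2},2)$ gives $P_{2m+3}(\alpha_{2m+1})<0$; combined with $P_{2m+3}(2)>0$ and the unique-positive-root property, IVT forces $\alpha_{2m+3}>\alpha_{2m+1}$. The identical argument for $Q$ handles the even case. To identify the limit, since $\{\alpha_{2m+1}\}$ is increasing and bounded above by $2$ it converges to some $L\in[\sqrt{2},2]$; I would then rewrite $P_{2m+1}(\alpha_{2m+1})=0$ as $\alpha_{2m+1}^m(\alpha_{2m+1}^2-\alpha_{2m+1}-2)=-2$, and since $\alpha_{2m+1}\geq\sqrt{2}$ makes $\alpha_{2m+1}^m\to\infty$, the quadratic factor must tend to $0$, which forces $L=2$. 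The same reasoning yields $\beta_{2m}\to 2$, which is also consistent with the convention $\alpha_\infty=2$ taken from the $k=\infty$ polynomial $x^2-x-2$.

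Finally, for part (3), the quickest route is the term-by-term identity $P_{2m+1}(x)=Q_{2m}(x)-1$. Substituting $x=\beta_{2m}$ gives $P_{2m+1}(\beta_{2m})=-1<0$; together with $P_{2m+1}(2)>0$ and the uniqueness of the positive root, IVT forces $\alpha_{2m+1}>\beta_{2m}$. I expect no serious obstacle in this lemma: the entire argument rests on a handful of algebraic identities relating the two families to each other and to the test values $x=\sqrt{2}$ and $x=2$. The one place requiring mild care is the sign analysis at $x=\sqrt{2}$ for small $m$, to keep straight that $P_3(\sqrt{2})=0$ while $P_{2m+1}(\sqrt{2})<0$ strictly for $m\geq 2$.
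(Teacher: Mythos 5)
Your proof is correct and follows essentially the same route as the paper's: sign evaluation of $P_{2m+1}$ and $Q_{2m}$ at $x=\sqrt{2}$ and $x=2$ combined with uniqueness of the positive root, monotonicity via the negativity of $P_{2m+3}-P_{2m+1}$ (resp.\ $Q_{2m+2}-Q_{2m}$) on $(0,2)$, and the identity $P_{2m+1}=Q_{2m}-1$ for item (3). You simply make the paper's ``straightforward calculations'' explicit (the factorization $x^{m}(x-2)(x+1)$ and the limit argument identifying $\alpha_{\infty}=2$), and you treat the boundary case $m=1$, where $P_{3}(\sqrt{2})=0$ rather than being strictly negative, more carefully than the paper does.
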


\begin{proof}
Note that $P_{2m+1}(\sqrt{2})<0$ and $P_{2m+1}(2) >0$ coupled with the fact that the positive root $\alpha_{2m+1}$ is unique imply 
$\sqrt{2}<\alpha_{2m+1} <2$. Now, a straightforward calculation shows that $P_{2m+3}<P_{2m+1}$ for 
$0<x<2$, and thus the roots 
$\{\alpha_{2m+1}\}_{m=1}^{\infty}$
are increasing.  This proves item (1). 

For item (2), a computation shows 
$Q_{2m}(\sqrt{2})<0$ and $Q_{2m}(2)=3$. So,
$\sqrt{2} <\beta_{2m} < 2$. Also, a short  computation yields
$Q_{2m+2} <Q_{2m}$ and thus 
$\{\beta_{2m}\}_{m=2}^{\infty}$ is an increasing sequence. 

Finally, item (3)  follows from 
$P_{2m+1}(x)-Q_{2m}(x)=-1<0$, and so 
$P_{2m+1}(x)<Q_{2m}(x)$ and therefore  the unique positive zero of 
$Q_{2m}$   must occur before the unique positive zero of $P_{2m+1}$.
\end{proof}

\section{The linear system associated to a  recurrence relation (solving for coefficients)}\label{sec: linear system}

Let $P$ be one of the polynomials $P_{2m+1}$ or $Q_{2m}$.
Then $P$ has $m+1$ distinct roots, and a  dominant positive root  (Lemma \ref{lem: dominant root}).  Denote the roots by $\lambda_{0},...,\lambda_{m}$ and designate $\lambda_{0}$ to be  the dominant positive root. A general solution for the counting (linear recurrence)  problem has the form
\begin{displaymath}
g(t)=a_{0}\lambda_{0}^{t}+...+a_m \lambda_{m}^{t}
\end{displaymath}
Now assume the initial conditions are 
$$
g(0)=b_{0},  g(1)=b_{1},..., g(m)=b_{m}
$$
we obtain  the $(m+1) \times (m+1)$ linear system 
\begin{equation}
\begin{pmatrix}
     1 & 1 & \cdots & 1  \\
   \lambda_{0} & \lambda_{1} & \cdots & \lambda_{m}  \\
  \lambda_{0}^{2} & \lambda_{1}^{2} & \cdots & \lambda_{m}^{2}  \\
\vdots& \vdots & \cdots & \vdots \\
  \lambda_{0}^{m} & \lambda_{1}^{m} & \cdots & \lambda_{m}^{m}  \\
\end{pmatrix}
\begin{pmatrix}
      a_{0}   \\
      a_{1}   \\
      \vdots     \\
       a_{m}
\end{pmatrix}
=
\begin{pmatrix}
    b_{0}   \\
    b_{1}     \\
    \vdots     \\
    b_{m}    \\
\end{pmatrix}
\end{equation}
Denoting the unique solution as $(a_{0},,,,a_{m})$, we see that 
the constant in the asymptotic solution of the counting problem is the coefficient $a_{0}$. 

\begin{rem} The initial conditions need not start at $t=0$. In that case the above linear system should  be adjusted appropriately. 
\end{rem}

\noindent{\bf Example:} Let $k=4$. In this illustrative example we determine the growth rate and its leading coefficient  $d_4$ for the
number of reciprocal geodesics based at the order two cone point of the Hecke surface $X_{4}$. That is, we work with the $(2,4,\infty)$ Hecke group.  Noting that $m=2$, the characteristic polynomial associated to the linear recurrence relation is
\begin{displaymath}
x^{3}-2x-1=
 \left(x-\frac{1+\sqrt{5}}{2}\right)
 \left(x-\frac{1-\sqrt{5}}{2}\right)
\left( x+1\right)
\end{displaymath}

Thus the general solution of this recursion problem is 
\begin{displaymath}
\mathcal{N}_{2t}=d_{4}\left(\alpha\right)^{t}
+a\left(\bar{\alpha}\right)^{t}
+b\left(-1\right)^{t}
\end{displaymath}
where we have set  \( \alpha = \frac{1 + \sqrt{5}}{2} \), the golden ratio,  \( \bar{\alpha} = \frac{1 - \sqrt{5}}{2} \), its conjugate, and  $d_{4}, a$, and $b$ are constants to   be determined by the initial conditions.  Note that  $\frac{1+\sqrt{5}}{2}$ is the dominant root. It is approximately 
$1.61803$. Now for $m=2$, by  Proposition \ref{thm:recurrence relation even k},   we can read off the initial conditions  as 
$\mathcal{N}_{2(2)}=2, \mathcal{N}_{2(3)}=1$,
and $\mathcal{N}_{2(4)}=4$. This leads to the following linear system with three  equations and three  unknowns $d_{4}, a, b$. 

\[
A
\begin{bmatrix}
d_4 \\
a \\
b
\end{bmatrix}
=
\begin{bmatrix}
2 \\
1\\
4
\end{bmatrix}
\]

where

\[
A =
\begin{bmatrix}
\alpha^2 & \bar{\alpha}^2 & 1 \\
\alpha^3 & \bar{\alpha}^3 & -1 \\
\alpha^4 & \bar{\alpha}^4 & 1
\end{bmatrix}
\]

Numerically approximating $A$  and computing its inverse we get 

\[
A^{-1} =
\begin{bmatrix}
0.04033 & 0.10557 & 0.06525 \\
4.95967 & 1.89443 & -3.06525 \\
-1.00000 & -1.00000 & 1.00000
\end{bmatrix}
\]
and thus

\[
\begin{bmatrix}
d_4 \\
a \\
b
\end{bmatrix}
=
A^{-1}
\begin{bmatrix}
2 \\
1 \\
4
\end{bmatrix}
=
\begin{bmatrix}
\phantom{-}0.44721 \\
-0.44721 \\
\phantom{-}1.00000
\end{bmatrix}
\]

We conclude that  $d_{4}=0.44721$ as in 
Table \ref{ta: poly}.

\end{document}